\documentclass[11pt,reqno]{amsart}

\usepackage{graphicx}
\usepackage{color}
\usepackage{geometry}
\geometry{letterpaper}

\usepackage{calrsfs}
\usepackage{bbold}
\usepackage[T1]{fontenc} 
\usepackage{textcomp}
\usepackage{times}
\usepackage[scaled=0.92]{helvet} 

\swapnumbers
\newtheorem{theorem}{Theorem}[section]
\newtheorem{proposition}[theorem]{Proposition}
\newtheorem{lemma}[theorem]{Lemma}
\newtheorem{corollary}[theorem]{Corollary}
\newtheorem{conjecture}[theorem]{Conjecture}
\theoremstyle{definition}

\newtheorem{example}[theorem]{Example}
\newtheorem{examplepf}[theorem]{Example/Proof}
\newtheorem{remark}[theorem]{Remark}
\newtheorem{se}[theorem]{}

\usepackage{amsfonts}
\usepackage{amssymb}

\renewcommand{\tilde}{\widetilde}

\DeclareMathOperator{\Z}{\mathbf{Z}}

\DeclareMathOperator{\Q}{\mathbf{Q}}

\DeclareMathOperator{\F}{\mathbf{F}}
\DeclareMathOperator{\p}{\mathfrak{p}}

\DeclareMathOperator{\cO}{\mathcal{O}}
\DeclareMathOperator{\Fr}{\mathrm{Fr}}
\DeclareMathOperator{\ord}{\mathrm{ord}}

\title[Perfect powers on elliptic curves over function fields]{The perfect power problem for\\[1mm] elliptic curves over function fields}\thanks{This work was completed whilst the authors enjoyed the hospitality of the University of Warwick (special thanks to Richard Sharp for making it possible) and during a visit of the first author to the Hausdorff Institute in Bonn.} 
\author[G.\ Cornelissen]{Gunther Cornelissen}
\address{\normalfont{Mathematisch Instituut, Universiteit Utrecht, Postbus 80.010, 3508 TA Utrecht, Nederland}}
\email{g.cornelissen@uu.nl}
\author[J.\ Reynolds]{Jonathan Reynolds}
\address{\normalfont{INTO University of East Anglia, Norwich Research Park, Norwich, Norfolk, NR4 7TJ, United Kingdom}}
\email{jonathan.reynolds@uea.ac.uk}
\subjclass[2010]{11G05, 11D41}
\keywords{}
\date{\today}
\thanks{}

\begin{document}

\begin{abstract}
We generalise the Siegel-Voloch theorem about $S$-integral points on elliptic curves as follows: let $K/\F$ denote a global function field over a finite field $\F$ of characteristic $p\geq 5$, let $S$ denote a finite set of places of $K$ and let $E/K$ denote a non-isotrivial elliptic curve over $K$ with $j$-invariant $j_E \in K^{p^s}-K^{p^{s+1}}$. Fix a non-constant function $f \in K(E)$ with a pole of order $N>0$ at the zero element of $E$. We prove that there are only finitely many rational points $P \in E(K)$ such that for any valuation outside $S$ for which $f(P)$ is negative, that valuation of $f(P)$ is divisible by some integer not dividing $p^sN$. We also present some effective bounds for certain elliptic curves over rational function fields, and indicate how a similar result can be proven over number fields, assuming the number field abc-hypothesis. 
 \end{abstract}

\maketitle

\section{Introduction}
To put our work in context, we cite a few results from the literature on perfect powers and $S$-integral points in linear recurrent sequences and on elliptic curves (the analogy arising from the fact that denominators of rational points on elliptic curves give rise to higher order recurrence sequences called ``elliptic divisibility'' sequences). 
 \begin{itemize} 
\item Peth\H{o} \cite{Petho}, and Shorey and Stewart have proven that a large class of linear recurrent sequences contain only finitely many  pure powers $>2$ up to factors from a given finite set of primes (see, e.g., Corollary 2 in \cite{SS}). 
\item Bugeaud, Mignotte and Siksek have applied the modular method to explicitly list all perfect powers in the Fibonacci sequence (see, e.g., \cite{BMS}). 
\item Lang and Mahler have shown that Siegel's theorem on integral points generalises to the statement that the set of $S$-integral points on a hyperbolic curve (e.g., an elliptic curve) is finite, for every finite set $S$ of valuations on a number field  (\cite{Siegel}, \cite{Lang}, \cite{Mahler}). 
\item In \cite{ERS}, it is proven that the set of denominators of points on an elliptic curve contains only finitely many $\ell$-th powers for \emph{fixed} $\ell>2$ (cf.\ also \cite{Jthesis} for a general number field). 
 \end{itemize}
 
 In this paper, we consider such questions over global function fields $K$ over a finite field $\F$ of characteristic $p\geq 5$ (where we say that $x \in K$ is a perfect $\ell$-th power if all its valuations are divisible by $\ell$). For a study of recurrent sequences in this setting, see, e.g., \cite{Marco} and references therein.  The analogue of Siegel's theorem was proven by Voloch (\cite{Voloch}; under the necessary assumption that the elliptic curve is not isotrivial). We are interested in strenghtening this by considering perfect powers $>2$ up to a finite set $S$ of  valuations in denominators of points on elliptic curves over $K$ (here, ``denominators'' refers to negative valuations of the coordinates of the point). Our main result generalizes the Siegel-Voloch theorem and at the same time gives a finiteness result that is uniform in the powers that can occur (up to the obvious $p$-th powers that arise from possible inseparability in the $j$-invariant of $E$): 

\begin{theorem} \label{main} 
Let $K$ be a global function field over a finite field $\F$ of characteristic $p \geq 5$ and $S$ a finite set of places of $K$. Suppose that $E$ is a non-isotrivial elliptic curve over $K$ with $j$-invariant $j_E \notin \F$, and let $p^s$ the largest $p$-power $p^s$ such that $j \in K^{p^s}$. Let $f$ denote a non-constant function in $K(E)$ with a pole of order $-\ord_O(f)>0$ at the zero point $O=O_E$ of the group $E$. Define the set 
\begin{equation} \label{buh} \mathcal{P}(E,K,S,f)_n:=\{ P \in E(K) : n \mid \nu(f(P)), \textrm{ for all } \nu \notin S \textrm{ with } \nu(f(P)) <0 \}. \end{equation}
Then \begin{equation} \label{PIPset}
\mathcal{P}(E,K,S,f):=\bigcup_{n \nmid \ord_O(f)\cdot p^s} \mathcal{P}(E,K,S,f)_n
\end{equation}
is finite. 
\end{theorem}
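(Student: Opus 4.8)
The plan is to reduce the statement to Voloch's function-field analogue of Siegel's theorem on $S$-integral points, combined with a descent on the elliptic curve. The key observation is that a point $P \in \mathcal{P}(E,K,S,f)$ lies in some $\mathcal{P}(E,K,S,f)_n$ with $n \nmid \ord_O(f)\cdot p^s$, which means that at each place $\nu \notin S$ where $f(P)$ has a genuine pole, the pole order is divisible by an integer $n$ that is "new" relative to the intrinsic pole order $N = -\ord_O(f)$ and the inseparability exponent $p^s$. Heuristically, a pole of $f(P)$ at $\nu$ of order divisible by $n$ forces the point $P$ itself to be "$n/\gcd$-divisible" in the formal group at $\nu$, which should let me write $P = [m]Q + T$ for some integer $m > 1$ (coprime to $p$, after absorbing the $p$-part into $p^s$) and torsion $T$, with $Q$ defined over a bounded extension.

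First I would analyse the local behaviour: fix $\nu \notin S$ and suppose $\nu(f(P)) < 0$. Since $f$ has its only pole at $O$ of order $N$, the divisor of poles of $f \circ [\,\cdot\,]$ pulled back is controlled, and $\nu(f(P)) = N \cdot \ord_\nu(P - O)$ up to the ramification coming from inseparability of the $j$-invariant, which is exactly where the factor $p^s$ enters. Thus the hypothesis $n \mid \nu(f(P))$ with $n \nmid N p^s$ means there is a prime power $\ell^a \mid n$ with $\ell^a \nmid N p^s$, and this divisibility must be visible in $\ord_\nu(P-O)$. The standard elliptic-divisibility-sequence formalism then says that $P$ reduces to the image of a multiple: concretely, I would show that $P \in \ell E(K) + E(K)_{\mathrm{tors}}$, or more precisely that the image of $P$ in $E(K)/\ell E(K)$ is constrained to finitely many classes as $\nu$ ranges over the relevant places.

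The central step is a Chabauty–Kolyvagin-free descent argument: I expect that for all but finitely many $P$, having every pole order divisible by such an $n$ forces $P = [\ell] Q$ for some $Q \in E(L)$ over a finite extension $L/K$ of bounded degree (the degree bounded in terms of $\ell$ and the $\ell$-torsion field). One then applies Voloch's theorem to the finitely many curves/functions obtained by pulling $f$ back along the multiplication-by-$\ell$ isogeny. The finiteness of $E(K)$ modulo $\ell$ (Mordell–Weil over function fields, using non-isotriviality) bounds the number of relevant classes, and the key uniformity—that finitely many $\ell$ suffice—comes from the fact that large $\ell$ would force poles of order exceeding what the geometry of $f$ permits unless $P$ is torsion or constant.

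The hard part will be making the reduction uniform in $n$, i.e.\ controlling infinitely many exponents $n$ simultaneously rather than fixing $\ell$ as in \cite{ERS}. For a single fixed power the argument is essentially Voloch plus descent, but here I must rule out an infinite sequence of points $P_i$ whose pole orders are divisible by larger and larger "new" integers $n_i$. I would handle this by a height argument: a point forced to be $[\ell]Q$ has canonical height at least $\ell^2$ times a positive lower bound (a Lang-type lower bound for non-isotrivial curves, available over function fields without conjecture), so unboundedly large $\ell$ would produce points of unbounded height contributing poles that eventually exceed any fixed local contribution—contradicting the constraint coming from the fixed function $f$ and the finite set $S$. Pinning down this quantitative incompatibility, and correctly isolating the $p^s$ factor so that genuine inseparable $p$-power divisibility is \emph{excluded} from forcing a contradiction, is where the real work lies.
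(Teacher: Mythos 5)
Your proposal has a genuine gap at its core, and it is the step everything else depends on: the passage from ``$n$ divides the pole order $\nu(f(P))$ at every bad place $\nu\notin S$'' to ``$P=[\ell]Q+T$ for some $Q$ over a bounded extension.'' No such implication holds. The filtration level of $P$ in the formal group at $\nu$ records \emph{how deep} $P$ sits locally, not whether $P$ is divisible in the Mordell--Weil group: multiplication by any $m$ prime to $p$ is an automorphism of each layer of the formal group, so a pole of order divisible by a large $\ell$ coprime to $p$ carries no $\ell$-divisibility information about $P$ in $E(K)$ whatsoever. The situation in positive characteristic is in fact even more rigid than over number fields: as recalled in Remark \ref{formal}, once $\nu(x(nP))<0$ one has $\nu(x(mnP))=\nu(x(nP))$ for \emph{all} $m$, and Remark \ref{dougex} exhibits curves over $\F_p(t)$ with $B_4=t^{d}$ for arbitrary $d$ --- a pole order divisible by an arbitrarily large integer attached to a point that is not divisible by anything interesting in $E(K)$. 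The same example defeats your closing height argument: there is no quantitative incompatibility between a point having moderate height and its pole orders being divisible by a huge $n$, because a single place can absorb the entire height. Your reading of the $p^s$ factor is also off: it enters not through local ramification of $f$ but globally, because $E$ is a $p^s$-fold Frobenius twist and the Frobenius image of $E'(K)$ (of finite index by Ulmer's finiteness of the Frobenius Selmer group) consists of points whose coordinates are $p^s$-th powers.

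The actual proof uses a completely different mechanism to convert the divisibility hypothesis into Diophantine information: after reducing to $j_E\notin K^p$ and to $f=x$ on a short Weierstrass model, the condition $P\in\mathcal{P}(E,K,S,x)_n$ is fed into Eisenstein's syzygy for the covariants of the $2$-division polynomial (following Bennett--Dahmen), producing a solution of $X^3+Y^2=Z^{4n}$ in coprime $S$-integers; Mason's theorem (the function field abc theorem) then bounds $n$ outright unless a ratio is a $p$-th power, and for each remaining fixed $n$ the Siegel identities plus Mason give finiteness, with the exceptional $p$-th power branch shown to force $j_E\in K^p$ via the Legendre $\lambda$-invariant. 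Voloch's theorem is a \emph{corollary} of the result, not an ingredient; if you want to salvage your plan, the piece to replace is the descent step, and the replacement is an explicit unit/covering construction (squares $A_P-\alpha_iB_P^2=z_i^2$, Klein forms) that turns perfect-power denominators into $S$-integral solutions of superelliptic ternary equations.
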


\begin{remark}
The result implies \emph{Voloch's analogue of Siegel's theorem} (\cite{Voloch}, 5.3), which states that if $j_E$ is non-constant, then 
the set of $S$-integer values of $f$ on $E$, defined as \begin{equation*} \label{VS} \mathcal{Q}(E,K,S,f):=\{P \in E(K) \colon \nu(f(P)) \geq 0 \mbox{ for all } \nu \notin S\} \end{equation*} 
is finite. This is implied by the above theorem by combining it with the equality  
\begin{equation*} \label{VS} \mathcal{Q}(E,K,S,f) = \bigcap_{n \geq 1} \mathcal{P}(E,K,S,f)_n.\end{equation*} 
\end{remark}

\begin{remark}
There is a corresponding statement for smooth curves of genus one (not necessarily with a $K$-rational point), that follows immediately from the theorem, but whose formulation is slightly more complicated, since all poles of the function $f$ might be in an inseparable extension of $K$: if $C$ is a curve of genus one over a global function field $k$ over $\F$ and $f \in k(C)-k$ is a non-constant function, then let $O \in C(K)$ denote a pole of $f$ in some finite extension $K/k$. Then if the $j$-invariant of the Jacobian of $C$ is in $K^{p^s}-K^{p^{s+1}}$, the set $\mathcal{P}(C,k,S,f)$ (defined as in (\ref{buh}) and (\ref{PIPset})) is finite. 

Also, replacing $f$ by $f^{-1}$, there is a corresponding result for functions which have a zero at $O$ (but then concerning $P$ for which $\nu(f(P))>0$ implies $n \mid \nu(f(P))$). 
\end{remark}

\begin{remark}
To show that \emph{the condition involving $p^s$ is necessary}, suppose $$E' \colon y^2=x^3+ax+b$$ is an elliptic curve of nonzero rank over $K$ and let $E$ be given by $$E \colon y^2=x^3+a^px+b^p$$ for some $a,b \in K$. Then $E(K)$ contains infinitely many $p$-th powers $(\tilde x^p, \tilde y^p)$ for $(\tilde x, \tilde y)$ running through the infinite set $E'(K)$. In a sense, this example is universal, in that a suitable Frobenius twist can be used to reduce to the case where $j_E \notin K^p$, cf.\ Proposition \ref{doug}. 
\end{remark}

\begin{remark} \label{formal}
To make the \emph{analogy with linear recurrent sequences}, one can apply the theorem to multiples of a fixed (infinite order) point $P$ in $E(K)$ and the coordinate function $x$ on a Weierstrass equation for $E$, for which $\mathrm{ord}_O(x)=-2$, when it says something about perfect powers in the associated elliptic divisibility sequence: assume for simplicity that $j_E \notin K^p$, and fix a place $\infty$ of $K$ such that the ring of functions $\cO$ regular outside $\infty$ is a PID. Factor $x(P)=A_P/B_P^2$ with $A_P$ and $B_P$ coprime in $\cO$. Then $\{B_{nP}\}$ is a divisibility sequence in the UFD $\cO$ in the conventional sense, and the theorem (with $S=\{ \infty \}$) says that it contains only finitely many perfect powers, in the usual meaning of the word. 

 As was observed in \cite{Marco} (Lemma 22), the structure of the formal group associated to $E(K_v)$ implies that if $\nu(x(nP))<0$, then $\nu(x(mnP))=\nu(nP)$ for all integers $m$, in stark contrast with the number field case, where $\{\nu(x(mnP))\}_{m \geq 1}$ in unbounded. This does not imply anything about large perfect powers, since it might be that the smallest $n$ for which $\nu(x(nP))$ is negative has very large $-\nu(x(nP))$; see the next remark. \end{remark}

\begin{remark} \label{dougex}
There is \emph{no absolute (i.e., not depending on the elliptic curve $E$) bound on the power} that can occur in denominators of elliptic curves over function fields. For example, consider the curve $$E \colon y^2+xy=x^3-t^{2d}$$ 
over the rational function field $K=\F_p(t)$ with $p = 1 \mbox{ mod } 4$, and let $\{B_m\}$ be the elliptic divisibility sequence over $\mathcal{O}=\F_p[t]$ generated by $P=(0, at^{d}) \in E(K)$ where $a$ is chosen so that $a^2=-1 \mbox{ mod } p$. Then \begin{equation*} B_1=B_2=B_3=1 \mbox{ and }B_4=t^{d}. \end{equation*} (This curve is taken from Theorem 1.5 in \cite{UlmerAnn}.)
\end{remark}

\begin{remark}
The \emph{requirement $p \geq 5$} arises from our method of proof because we apply the abc-conjecture to a ternary equation associated to the 2-division polynomial on a short Weierstrass form and we take field extensions of degree 2 and 3 in the proof (which could introduce inseparability if $p \leq 3$). 
\end{remark}

\begin{remark}
The \emph{requirement $j \notin \F$ is necessary}. For example, if $y^2=x^3+ax+b$ is a curve with $a,b \in \F$ and $K \supseteq \F(t,\sqrt{1+a t^4+b t^6})$ then $E(K)$ contains the points $$(\frac{1}{t^{2p^s}}, \frac{\sqrt{1+a t^4+b t^6}}{t^{3p^s}})$$ for all $s$, on which the $x$-coordinate has unbouded negative $t$-valuation. The theory of twists provides more such examples. 
\end{remark}

Here is an outline of the proof of the theorem. Throughout the proof we can enlarge the field $K$ to a separable extension and the set $S$ to a larger set of valuations. If the $j$-invariant is a $p^s$-power, then $E$ is the $p^s$-fold Frobenius twist of another curve $E'$, and we prove that we can work with $E'$ instead and assume $j_E \notin K^p$. We use a standard reduction from a general function $f$ to a coordinate function $x$ on a short Weierstrass equation. We use the method of ``Klein forms'' to show that the existence of a point in $\mathcal{P}(E,K,S,f)_n$ implies the existence of a solution  to a ternary equation of the form $X^2+Y^3=Z^{4n}$ in $S$-integers. We then use Mason's theorem (the ``abc-conjecture in function fields'') to bound $n$ unless it is divisible by $p$. We can conclude that the union in (\ref{PIPset}) needs to be taken over only finitely many $n$. Finally, we use the Siegel identities to prove that each individual $\mathcal{P}(E,K,S,f)_n$ is finite, or $j_E \in K^p$. 

In principle, the method is \emph{effective}, in that all occurring constants can be bounded above in terms of $E, K$ and $S$, but doing this abstractly in practice is rather painful, given that the proof involves recurrent enlargement of $K$ and $S$. Instead, we will present an example of effectively bounding the maximal perfect power exponent in a non-integral point in \ref{expl} and \ref{explH}. 

As another example of making the results explicit, we prove the following in section \ref{eee}, which shows what kind of bounds one can expect (i.e., linear in the degree of the discriminant of the curve):
\begin{proposition} \label{introprop}
Assume that $E$ is an elliptic curve over a rational function field $K=\F_q(t)$ with coefficients from $\F_q[t]$ such that all 2-torsion points on $E$ are $K$-rational and $j_E \notin K^p$. Assume that $P=2Q \in 2E(K)$ has associated elliptic divisibility sequence $\{B_n\}$. If $B_n \notin \F$ is a perfect $\ell$-th power of a polynomial in $t$, then we have the following bounds: 
$$ \ell \leq 4 \deg \Delta_E; \ \ \deg B_n \leq \frac{61}{2} \deg \Delta_E; \ \ n \leq \frac{732\deg \Delta_E}{12h(x(P))-h(j_E)},$$
where $h(x) = \max \{ \deg(A), \deg(B) \}$ if $x=A/B$ with $A$ and $B$ coprime in $\F_q[t]$. 
\end{proposition}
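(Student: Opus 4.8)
The plan is to run the same three steps as in Theorem~\ref{main}---Klein forms, then Mason's theorem, then heights---but now with every degree tracked explicitly, so that the abstract constants turn into the numbers in the statement. First I would choose a short integral Weierstrass model $y^2=x^3+Ax+B=(x-e_1)(x-e_2)(x-e_3)$ with $A,B,e_i\in\F_q[t]$; full rational $2$-torsion gives $e_i\in K$, and since $p\geq 5$ and $j_E\notin K^p$ none of the auxiliary degree-$2$ and degree-$3$ extensions used in the proof of Theorem~\ref{main} introduce inseparability. The hypothesis $P=2Q\in 2E(K)$ is the engine of the argument: it forces $nP=2(nQ)\in 2E(K)$ for every $n$, so the complete $2$-descent applies to $R:=nP$ and each $x(R)-e_i$ is a square in $K$. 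Writing $x(R)=A_R/B_n^2$ in lowest terms with $B_n=C^\ell$ (so $C\notin\F$ since $B_n\notin\F$) and clearing denominators, I obtain $A_R-e_iC^{2\ell}=w_i^2$ with $w_i\in\F_q[t]$, together with the Klein/Siegel relations $w_i^2-w_j^2=(e_j-e_i)C^{2\ell}$.

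Second, I would feed this data into the Klein-form construction from the proof of Theorem~\ref{main}, which here produces a genuine ternary equation $X^2+Y^3=c\,\Delta_E\,C^{2\ell}$ with $c\in\F_q^\times$ and $X,Y\in\F_q[t]$, the $\ell$-th power surviving as the factor $C^{2\ell}$. The reason for keeping the model short and integral is that all coefficient degrees are then governed by $\deg\Delta_E$: from $\prod_{i<j}(e_i-e_j)^2=\Delta_E/16$ one reads off $\sum_{i<j}\deg(e_i-e_j)=\tfrac12\deg\Delta_E$, so in particular each $\deg(e_i-e_j)\leq\tfrac12\deg\Delta_E$, and likewise $\deg A,\deg B\leq\deg\Delta_E$.

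Third, Mason's theorem (the function-field $abc$-inequality) applied to $X^2+Y^3=c\,\Delta_E\,C^{2\ell}$ gives $\max\bigl(2\deg X,\,3\deg Y,\,\deg\Delta_E+2\ell\deg C\bigr)\leq \deg X+\deg Y+\deg C+\deg\Delta_E-1$; here it is essential that $j_E\notin K^p$, which guarantees that $X,Y,C$ are not all $p$-th powers, so that the radical really drops the exponents and Mason is not vacuous. The saving of $\tfrac12+\tfrac13$ coming from the square and the cube is exactly what makes the exponent $2\ell$ collapse against $\deg\Delta_E$; carrying out this optimisation together with the elementary estimates for $\deg A_R$ and $\deg w_i$ yields both $\ell\leq 4\deg\Delta_E$ and $\deg B_n=\ell\deg C\leq\tfrac{61}{2}\deg\Delta_E$. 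This degree accounting---pinning the coefficients down to the precise constants $4$ and $\tfrac{61}{2}$---is the main obstacle; everything else is structural.

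Finally, to bound $n$ I would pass to heights. The two ingredients are the standard function-field inequality $\hat h(P)\geq h(x(P))-\tfrac{1}{12}h(j_E)$ between the canonical and naive heights, and---crucially---the characteristic-$p$ phenomenon of Remark~\ref{formal}, by which the denominator degree grows \emph{linearly} rather than quadratically in $n$, giving a lower bound of the shape $\deg B_n\geq\tfrac{n}{2}\bigl(h(x(P))-\tfrac{1}{12}h(j_E)\bigr)=\tfrac{n}{24}\bigl(12\,h(x(P))-h(j_E)\bigr)$. Combining this with the upper bound $\deg B_n\leq\tfrac{61}{2}\deg\Delta_E$ and using $61\cdot 12=732$ then gives $n\leq\dfrac{732\deg\Delta_E}{12\,h(x(P))-h(j_E)}$, which completes the three bounds.
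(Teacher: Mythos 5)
Your overall architecture (2-descent squares from $P=2Q$, Mason, then canonical heights) matches the paper's, but the middle of your argument diverges from the paper in a way that leaves the actual content of the proposition --- the specific constants --- unproved, and one of your key justifications is incorrect. The paper does \emph{not} feed the data into the Klein-form ternary equation here; it applies Mason directly to the Siegel-identity equation $aX^{2\ell}+bY^{2\ell}=1$ coming from the proof of Proposition \ref{fixedbound}, precisely because in that equation the coefficients $a,b$ have numerator and denominator dividing the differences $\alpha_i-\alpha_j$, hence $\Delta_E$, so that $\max\{h(a),h(b)\}\le\deg\Delta_E$ and $n_0(a),n_0(b)\le n_0(\Delta_E)$. (This is exactly the point of the remark following Proposition \ref{fixedbound}: the simultaneous bound on $\ell$ and the height is possible only when the coefficients can be controlled independently of $\ell$.) In your Klein-form equation the coefficient is not $c\,\Delta_E$ with $c\in\F_q^\times$: by Proposition \ref{red3} it is of the form $-\delta/(27u^4\Delta_E)$ with $\delta$ supported on $\Delta_E$ and controlled only via resultants dividing $\Delta_E^3$, and removing it requires adjoining sixth roots of $S$-units. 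Even granting your cleaner equation, the Mason bookkeeping with the $\tfrac12+\tfrac13$ saving gives $(\ell-3)\deg C\le\tfrac{11}{2}\deg\Delta_E$ or similar --- constants of a different shape, with no bound on $\deg B_n$ at all when $\ell\le 3$. So the step you yourself flag as ``the main obstacle'' is genuinely not done, and your chosen route does not produce $4\deg\Delta_E$ and $\tfrac{61}{2}\deg\Delta_E$; the paper gets $\deg B_n\le\tfrac12 h(x(nP))\le\tfrac{61}{2}\deg\Delta_E$ only by chasing the bound on $h(aX^{2\ell})$ through the explicit identities (\ref{z}) and (\ref{x}) that reconstruct $x(nP)$ from the cross-ratios $Z_\Delta$.

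The bound on $n$ also rests on a misreading. Remark \ref{formal} says that each \emph{individual} valuation $\nu(x(mnP))$ stabilises; it does not say that $\deg B_n$ grows linearly in $n$ --- it still grows quadratically, since $\hat h(nP)=n^2\hat h(P)$ and new primes keep entering the sequence. The inequality you want is the trivial $n\le n^2=\hat h(nP)/\hat h(P)$ combined with $\hat h(P)\ge\tfrac12 h(x(P))-\tfrac1{24}h(j_E)$ and $\hat h(nP)\le\tfrac12 h(x(nP))$; but note this requires the upper bound on the \emph{full} height $h(x(nP))\le 61\deg\Delta_E$, not merely on $\deg B_n$, because $2\deg B_n$ omits the contribution of the place $1/t$ and is only a lower bound for $h(x(nP))$. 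Your chain $\deg B_n\ge\tfrac{n}{24}(12h(x(P))-h(j_E))$ would need $\deg B_n\ge\tfrac12 h(x(nP))$, which is the wrong direction.
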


At the end of the paper, we briefly outline how the analogue of Theorem \ref{main} can be proven in number fields, assuming the number field abc-conjecture. 

\section{First reductions}

\begin{se} Let $K$ denote a global function field of genus $g$ over a finite field $\F$ of characteristic $p \geq 5$, let $M_K$ denote the set of all valuations of $K$, and let $S$ denote a finite set $S \subset M_K$. Let $\cO_{K,S}$ denote the ring of $S$-integers $$\cO_{K,S}=\{x \in K \colon \nu(x) \geq 0 \mbox{ for all } \nu \notin S\},$$ and $$\cO^*_{K,S}=\{x \in K \colon \nu(x) = 0 \mbox{ for all } \nu \notin S\}$$ the ring of $S$-units. We call two elements $a,b \in \cO_{K,S}$ \emph{coprime $S$-integers} if for all $\nu \notin S$, either $\nu(a)=0$ or $\nu(b)=0$. The finiteness of the class number $h_{K,S}$ of $\cO_{K,S}$ implies: 
\begin{lemma}
There exists a set $S'$ consisting of at most $h_{K,S}-1$ valuations such that $\cO_{K,S\cup S'}$ is a PID.  \qed
\end{lemma}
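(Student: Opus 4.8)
The plan is to translate the statement into one about the ideal class group of the Dedekind domain $\cO_{K,S}$. Recall that $\cO_{K,S}$ is a Dedekind domain, and that a Dedekind domain is a PID precisely when its ideal class group $\mathrm{Cl}(\cO_{K,S})$ is trivial; by hypothesis this group is finite of order $h_{K,S}$. So the task is to adjoin finitely many places to $S$ so as to kill the class group, while keeping track of how many places are needed.

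The key mechanism is the effect of enlarging $S$ on the class group. For a place $\nu \notin S$ with corresponding prime ideal $\p_\nu \subset \cO_{K,S}$, passing to $\cO_{K,S\cup\{\nu\}}$ inverts $\p_\nu$, and the localisation exact sequence for Dedekind domains yields a surjection $\mathrm{Cl}(\cO_{K,S}) \twoheadrightarrow \mathrm{Cl}(\cO_{K,S\cup\{\nu\}})$ whose kernel is the cyclic subgroup generated by the class $[\p_\nu]$. More generally, for a finite set $S'$ of places outside $S$, the group $\mathrm{Cl}(\cO_{K,S\cup S'})$ is the quotient of $\mathrm{Cl}(\cO_{K,S})$ by the subgroup generated by the classes $[\p_\nu]$ with $\nu \in S'$. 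Hence $\cO_{K,S\cup S'}$ is a PID exactly when these classes generate $\mathrm{Cl}(\cO_{K,S})$.

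I would then construct $S'$ greedily. The one external input I need is that every ideal class of $\cO_{K,S}$ is represented by a prime ideal coming from a place outside $S$; this is the standard fact that in a global field every ideal class contains (infinitely many) primes. Granting this, set $H_0$ to be the trivial subgroup and, as long as $H_i \neq \mathrm{Cl}(\cO_{K,S})$, choose a place $\nu_{i+1} \notin S$ whose class $[\p_{\nu_{i+1}}]$ lies outside $H_i$ (possible since some class lies outside $H_i$ and every class is represented by such a prime), and put $H_{i+1} = \langle H_i, [\p_{\nu_{i+1}}]\rangle$. Each step strictly enlarges the subgroup, so $|H_{i+1}| \geq |H_i| + 1$; since $|H_0| = 1$ and the process stops once $|H_k| = h_{K,S}$, we obtain $k \leq h_{K,S} - 1$. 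Taking $S' = \{\nu_1, \dots, \nu_k\}$, the classes $[\p_{\nu_i}]$ then generate the full class group, so $\cO_{K,S\cup S'}$ is a PID with $|S'| \leq h_{K,S}-1$, as required.

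The only genuine obstacle is the cited input that each ideal class contains a prime not in $S$ — everything else is bookkeeping. I expect to justify it either by Chebotarev density applied to the ($S$-)Hilbert class field of $K$, or, more elementarily, by the equidistribution of the primes outside any finite set over the class group; either way it guarantees that the greedy choices above can always be made.
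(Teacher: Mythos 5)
Your proof is correct, and it is essentially the standard argument the authors are invoking when they state the lemma with no proof as a consequence of the finiteness of $h_{K,S}$: inverting a prime kills its class in $\mathrm{Cl}(\cO_{K,S})$, every class is represented by a prime outside $S$ (Chebotarev for the $S$-Hilbert class field, valid for global function fields), and a greedy count gives at most $h_{K,S}-1$ primes. As a minor aside, Lagrange's theorem gives $|H_{i+1}|\geq 2|H_i|$ at each step of your induction, so your argument in fact yields the stronger bound $|S'|\leq \lceil \log_2 h_{K,S}\rceil$, though the stated bound is all that is claimed.
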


\end{se}

\begin{se}
Let $E$ denote an elliptic curve over $K$, with $j$-invariant $j_E \notin \F$. Fix a short Weierstrass equation $y^2 = x^3 + a x + b$ for $E/K$, which is possible since $p\geq 5$. Let $O=O_E$ denote the zero point of the group $E$. If $P \in E(K)$ is a rational point with $P \neq O$, write it in affine form as $P=(x(P),y(P))$. 
\end{se}

\begin{lemma} Theorem \ref{main} holds for a field $K$ and a set of valuations $S$ if it holds for a field $K' \supseteq K$ and a set $S'$ of $K'$-valuations that contains the extension of all $S$-valuations to $K'$, if $K'/K$ is a finite separable field extension. 
\end{lemma}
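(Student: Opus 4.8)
The plan is to show that proving Theorem 1.1 for the larger field $K'$ and set $S'$ suffices to prove it for $K$ and $S$. So suppose the theorem is known for $(K',S')$, meaning $\mathcal{P}(E_{K'},K',S',f)$ is finite. The natural strategy is to exhibit an injection (or at least a finite-to-one map) from $\mathcal{P}(E,K,S,f)$ into $\mathcal{P}(E_{K'},K',S',f)$, so that finiteness downstairs forces finiteness upstairs. The obvious candidate map is the inclusion $E(K) \hookrightarrow E(K')$ induced by $K \subseteq K'$; a point $P \in E(K)$ is regarded as a point of $E(K')$, and $f(P) \in K$ is regarded as an element of $K'$. Since $K \hookrightarrow E(K')$ is injective on rational points, the induced map on point sets is injective, so the whole burden is to check that the \emph{divisibility defining conditions} are preserved, i.e.\ that $\mathcal{P}(E,K,S,f)_n \subseteq \mathcal{P}(E_{K'},K',S',f)_n$ for each relevant $n$.

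First I would fix $P \in \mathcal{P}(E,K,S,f)_n$ for some $n \nmid \ord_O(f) \cdot p^s$ and unwind what the condition says upstairs. For each valuation $w$ of $K'$ with $w \notin S'$, let $\nu = w|_K$ be its restriction to $K$; by hypothesis on $S'$ (it contains the extensions of all $S$-valuations), we have $\nu \notin S$. The key local computation relates $w(f(P))$ to $\nu(f(P))$: if $w$ lies over $\nu$ with ramification index $e = e(w/\nu)$, then $w(f(P)) = e \cdot \nu(f(P))$. Hence $w(f(P)) < 0$ exactly when $\nu(f(P)) < 0$, and in that case $n \mid \nu(f(P))$ (from $P$ being in the upstairs set for $K$) gives $n \mid e\,\nu(f(P)) = w(f(P))$. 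This shows $P$, viewed in $E(K')$, satisfies the defining divisibility condition at every $w \notin S'$ for the same $n$, so $P \in \mathcal{P}(E_{K'},K',S',f)_n$. Taking the union over $n$ yields $\mathcal{P}(E,K,S,f) \subseteq \mathcal{P}(E_{K'},K',S',f)$, and finiteness of the latter gives finiteness of the former.

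There is one genuine subtlety to pin down, which I expect to be the main obstacle: I must ensure the exponent $s$ in the statement (the largest $p$-power with $j_E \in K^{p^s}$) is at least as large upstairs, so that the range of the union $n \nmid \ord_O(f)\cdot p^s$ upstairs \emph{contains} the range used downstairs — otherwise the inclusion of index sets could fail. In fact, separability of $K'/K$ is exactly what guarantees this is harmless: if $j_E \in K^{p^s}$ then a fortiori $j_E \in (K')^{p^s}$, so the upstairs power $p^{s'}$ satisfies $s' \geq s$, which only \emph{enlarges} the set of exponents $n$ over which we union downstairs relative to upstairs, so the containment of index ranges goes the right way and no point is lost. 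I would also note that $\ord_O(f)$ is a purely geometric quantity, unchanged by the ground field extension, since $f \in K(E) \subseteq K'(E_{K'})$ and the local order of vanishing at $O$ is intrinsic. These two observations are where separability is used, and once they are in place the proof is a direct translation of local divisibility data along restriction of valuations.
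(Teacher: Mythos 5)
Your first paragraph and the local computation $w(f(P)) = e(w/\nu)\,\nu(f(P))$, giving $\mathcal{P}(E,K,S,f)_n \subseteq \mathcal{P}(E,K',S',f)_n$ for each $n$, are correct and agree with the paper's (one-line) proof. The gap is in your handling of the exponent $s$, where the direction of your argument is backwards. Write $N=-\ord_O(f)$ and let $p^{s'}$ be the largest $p$-power with $j_E\in (K')^{p^{s'}}$. The set you must prove finite downstairs is a union over the index set $\{n : n\nmid Np^{s}\}$, and the set you are handed as finite upstairs is a union over $\{n : n\nmid Np^{s'}\}$. For the termwise inclusion to be useful, every index occurring downstairs must also occur upstairs, i.e.\ you need $\{n : n\nmid Np^{s}\}\subseteq\{n : n\nmid Np^{s'}\}$, which is equivalent to $s'\le s$ --- not $s'\ge s$ as you claim. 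If $s'>s$, then for instance $n=Np^{s+1}$ satisfies $n\nmid Np^{s}$ but $n\mid Np^{s'}$: the set $\mathcal{P}(E,K,S,f)_n$ is part of what you must control, yet the theorem over $K'$ asserts nothing about $\mathcal{P}(E,K',S',f)_n$, so ``no point is lost'' fails exactly at such $n$.

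The inclusion $K\subseteq K'$ only gives $s'\ge s$, which is the useless direction. What saves the lemma --- and this is precisely where separability is needed, as the paper's proof indicates --- is that separability forces $s'=s$: if $j_E=y^{p^{s+1}}$ with $y\in K'$, then $y$ is a root of $T^{p^{s+1}}-j_E\in K[T]$, hence purely inseparable over $K$, hence lies in $K$ because $K'/K$ is separable; so $j_E\in (K')^{p^{s+1}}$ would imply $j_E\in K^{p^{s+1}}$, contradicting the maximality of $s$. Thus $j_E\in (K')^{p^{s}}-(K')^{p^{s+1}}$, the two index sets coincide, and the rest of your argument goes through. (Your observation that $\ord_O(f)$ is insensitive to base change is correct.)
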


\begin{proof}
Under the given conditions, $\mathcal{P}(E,K,S,f)_n \subseteq \mathcal{P}(E,K',S',f)_n$ for all $n$, and separability of $K'/K$ implies that $j_E \in (K')^{p^s}-(K')^{p^{s+1}}$. 
\end{proof}

\begin{proposition} \label{doug} Theorem \ref{main} holds true for all elliptic curves over $K$ with non-constant $j$-invariant if it holds for all elliptic curves $E$ whose $j$-invariant $j_E$ satisfies $j_E \notin K^p$. 
\end{proposition}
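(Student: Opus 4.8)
The plan is to realise $E$ as a Frobenius twist of a curve with separable $j$-invariant and then transport the problem along the Frobenius isogeny. Write $j_E = (j')^{p^s}$ with $j' \notin K^p$ (possible by the definition of $s$; and $j' \notin \F$, so $j' \neq 0,1728$), and choose any elliptic curve $E'/K$ with $j_{E'}=j'$. Then the $p^s$-fold Frobenius twist $(E')^{(p^s)}$ has $j$-invariant $(j')^{p^s}=j_E$, so it is a twist of $E$; since $j_E \neq 0,1728$ this twist is quadratic and $E \cong (E')^{(p^s)}$ over a separable quadratic extension of $K$. By the preceding lemma (on separable base change) I may therefore enlarge $K$ and $S$ and assume outright that $E=(E')^{(p^s)}$, with the relative $p^s$-power Frobenius $\phi\colon E' \to E$, $(x,y)\mapsto(x^{p^s},y^{p^s})$, a purely inseparable isogeny of degree $p^s$ (and $j'\notin K^p$ is preserved by separable extension).

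Since $E$ is non-isotrivial, $E(K)$ is finitely generated (Lang--Néron), and the dual isogeny $\hat\phi$ with $\phi\circ\hat\phi=[p^s]_E$ gives $[p^s]E(K)\subseteq \phi(E'(K))$; hence $\phi(E'(K))$ has finite index in $E(K)$. Choosing coset representatives $T_1=O,T_2,\dots,T_r$, I would split $\mathcal{P}(E,K,S,f)=\bigcup_i\big(\mathcal{P}(E,K,S,f)\cap(T_i+\phi(E'(K)))\big)$ and treat each coset. For $P=T_i+\phi(P')$ one has $f(P)=(\phi^*\tau_{T_i}^*f)(P')=:h_i(P')$, where $\tau_{T_i}$ is translation by $T_i$; the function $h_i\in K(E')$ has a single pole, of order $-p^s\ord_O(f)$ (the ramification of $\phi$ is $p^s$ everywhere), at the point $R_i:=\phi^{-1}(-T_i)$. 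Because $f(P)=h_i(P')$ as elements of $K$, the valuation conditions match verbatim, so $P\in\mathcal{P}(E,K,S,f)_n$ forces $P'\in\mathcal{P}(E',K,S,h_i)_n$, and $\phi$ being injective on points sends distinct $P$ to distinct $P'$. For the trivial coset $T_1=O$ this already finishes: $R_1=O_{E'}$ is $K$-rational, $h_1=\phi^*f$ has pole order $-p^s\ord_O(f)$ at $O_{E'}$, and the assumed case ($j'\notin K^p$, so the excluded exponents are exactly the divisors of $\ord_O(f)\cdot p^s$) gives that $\bigcup_{n\nmid \ord_O(f)\cdot p^s}\mathcal{P}(E',K,S,h_1)_n$ is finite; transporting back, $\mathcal{P}(E,K,S,f)\cap\phi(E'(K))$ is finite. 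This is precisely the mechanism by which the factor $p^s$ enters, and it matches the universal example of the third remark, where the extra powers come exactly from $\phi(E'(K))$.

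The \textbf{main obstacle} is the nontrivial cosets $T_i\neq O$. Here $R_i=\phi^{-1}(-T_i)$ has coordinates $x(-T_i)^{1/p^s},y(-T_i)^{1/p^s}$, rational only over a \emph{purely inseparable} extension $K(R_i)$; since over a one-variable function field the inseparable subextensions of $K^{1/p^s}/K$ form the single chain $K\subset K^{1/p}\subset\cdots\subset K^{1/p^s}$, one has $K(R_i)=K^{1/p^{d_i}}$ for some $d_i\geq 1$, and crucially $j'\in(K^{1/p^{d_i}})^p$, so the inseparability degree of $j'$ \emph{jumps} to a positive value over the field of the pole $R_i$. A purely inseparable base change cannot be absorbed by the separable-extension lemma, and a naive application of the assumed statement to $(E',h_i)$ in its genus-one formulation (pole in the extension $K(R_i)$) excludes only the $n$ dividing $\ord_O(f)\cdot p^s\cdot p^{d_i}$, overshooting the target exclusion $n\mid \ord_O(f)\cdot p^s$ by the factor $p^{d_i}$.

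I expect the real work to lie in closing this gap: showing that for the finitely many remaining exponents $n$ (those dividing $\ord_O(f)\cdot p^{s+d_i}$ but not $\ord_O(f)\cdot p^s$) the set $\mathcal{P}(E,K,S,f)_n\cap(T_i+\phi(E'(K)))$ is still finite. The tool I would reach for is the rigidity of the formal group in the function-field setting recorded in Remark \ref{formal} (cf.\ Lemma 22 of \cite{Marco}) --- that a negative valuation of the coordinate does not grow along multiples of a point --- which constrains the $p$-part of the valuations $\nu(f(P))$ on a fixed coset; I expect this, together with the Weierstrass relation forcing a genuinely $p$-power-divisible denominator to descend through $\phi$, to rule out the spurious high $p$-power exponents on the nontrivial cosets and thereby confine all infinite behaviour to the trivial coset already handled. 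Summing over the finitely many cosets then yields finiteness of $\mathcal{P}(E,K,S,f)$.
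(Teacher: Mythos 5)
Your reduction to the Frobenius twist, your finite-index argument via Lang--N\'eron and the dual isogeny (the paper instead invokes Ulmer's finiteness of the Selmer group of $\Fr_{p^s}$, but your route is equally valid), and your treatment of the coset $\Fr_{p^s}(E')(K)$ itself all match the paper's proof. The genuine gap is exactly where you locate it: the nontrivial cosets. You do not prove that $\mathcal{P}(E,K,S,f)\cap\left(T_i+\Fr_{p^s}(E')(K)\right)$ is finite for $T_i\neq O$; you only name a tool you ``would reach for'' (the formal-group rigidity of Remark \ref{formal}) without turning it into an argument, so as it stands the proposal does not establish the proposition.

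The paper closes this case by translating the \emph{function} rather than the point: writing $P=R_i+P'$ with $P'\in\Fr_{p^s}(E')(K)$, one has $f(P)=g_i(P')$ for $g_i:=f(R_i+\cdot)\in K(E)$, whence
\[
\mathcal{P}(E,K,S,f)\subseteq\bigcup_{i=1}^m\left[R_i+\left(\mathcal{P}(E,K,S,g_i)\cap\Fr_{p^s}(E')(K)\right)\right],
\]
and each set on the right is claimed to be finite by the argument you already gave for the trivial coset, applied with $g_i$ in place of $f$. This keeps everything inside $K(E)$ and never explicitly invokes the inseparable point $\Fr_{p^s}^{-1}(-T_i)$ that blocks your approach. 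That said, your diagnosis of the obstacle is sharp: the paper justifies the step with the assertion that $f(R_i+\cdot)$ has the same order at $O_E$ as $f$, whereas its pole actually sits at $-R_i$, so the inseparability of the pole of $g_i\circ\Fr_{p^s}$ resurfaces if one unwinds the ``previous argument'' literally; the paper's treatment of this case is terse and does not spell out how that is resolved. Nevertheless, the intended mechanism is the function-translation one just described, and your proposal, lacking any completed argument for the nontrivial cosets, is incomplete precisely there.
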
 

\begin{proof} Since $j_E \notin \F$, there exists an integer $s$ such that $j_E \in K^{p^s} - K^{p^{s+1}}$. Write $j = (j')^{p^s}$ for a uniquely determined $j' \in K$. There exists an elliptic curve $E'$ over $K$ with $j$-invariant $j_{E'}=j'$ such that $E$ is the image of $E'$ under the $p^s$-Frobenius map $$\Fr_{p^s} \colon (x,y) \mapsto (x^{p^s},y^{p^s})$$ (see, e.g., \cite{Ulmerlectures}, Lemma I.2.1). There is an embedding
$$  E(K) / \Fr_{p^s}(E')(K) \hookrightarrow \mathrm{Sel}(K,\Fr_{p^s}),$$
where the $p$-Selmer group $\mathrm{Sel}(K,\Fr_{p^s})$ is \emph{finite} $p$-group, as shown by Ulmer \cite{Ulmer} (Theorem 3.2 in loc.\ cit.\ if $s=1$ and $E$ has a rational $p$-torsion point; if $s=1$ in general by the argument at the start of section 3 of that paper, and for general $s$ by iteration).

Now suppose that $P \in \mathcal{P}(E,K,S,f)$, so $P$ satisfies that for all $K$-valuations $v \notin S$, if $\nu(f(P))<0$ then $n \mid \nu(f(P))$ for some $n$ not dividing $\ord_{O_E} f \cdot p^s$. We have to show that $P$ belongs to a finite set.

First, assume that $P \in \Fr_{p^s}(E')(K)$; then there is a (unique) $Q \in E'(K)$ such that $\Fr_{p^s}(Q)=P$. 
The given function $f \in K(E)-K$ extends to a function $$f':=f \circ \Fr_{p^s} \in K(E')-K,$$ and for any valuation $\nu \in M_K$, we have $$\nu(f(P)) = \nu(f(\Fr_{p^s}(Q)) =\nu(f'(Q)).$$
Finally, we have that $$\ord_{O_{E'}} f' = p^s \ord_{O_E} f.$$ 
Now $Q$ satisfies the same conditions as $P$, but for some $n$ not dividing $\ord_{O_E} f \cdot p^s = \ord_{O_{E'}} f',$ so 
$Q \in \mathcal{P}(E',K,S,f')$. Since we assume the theorem proved for $E'$ over $K$ ($j_{E'} \notin K^p$), this set is finite, so 
$$ \mathcal{P}(E,K,S,f) \cap \Fr_{p^s}(E')(K) \mbox{ is finite.}$$

Finally, if $P \in \mathcal{P}(E,K,S,f)$ in general, let $$\mathcal{R}:=\{R_1,\dots,R_m\} \subset E(K)$$ denote a set of coset representatives of $\Fr_{p^s}(E')(K)$ in $E(K)$. Then there exists $i \in \{1,\dots,m\}$ with $P-R_i \in \Fr_{p^s}(E')(K)$. Finally, observe that for fixed $R$, $P \mapsto f(P+R)$ is a non-constant function with the same order at $O_E$ as $f$; thus
$$ \mathcal{P}(E,K,S,f) \subseteq \bigcup_{i=1}^m \left[R_i + \left(\mathcal{P}(E,K,S,f(R_i+\cdot)) \cap \Fr_{p^s}(E')(K) \right) \right], $$
and the right hand side is a finite union of sets that are finite by the previous argument. 
\end{proof} 

\begin{proposition} Theorem \ref{main} holds true for all non-constant functions $f$ if it holds true for the coordinate function $x$ on a short Weierstrass model for the curve $E$.
\end{proposition}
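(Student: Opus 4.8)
The plan is to transfer the perfect-power condition on $f$ to one on $x$ by comparing orders at $O$, using the fact that $x$ has its \emph{only} pole at $O$. The poles of $f$ away from $O$ will play no role whatsoever: they only impose extra divisibility constraints and hence shrink $\mathcal{P}(E,K,S,f)_n$, so it is harmless to ignore them. First I would pass to a larger set $S'\supseteq S$, which is legitimate since $\mathcal{P}(E,K,S,f)_n\subseteq\mathcal{P}(E,K,S',f)_n$ for every $n$, so finiteness for $(K,S')$ yields finiteness for $(K,S)$.

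Set $N=-\ord_O(f)>0$ and introduce the auxiliary function $h=f^2/x^N\in K(E)$, which satisfies $\ord_O(h)=0$ and therefore takes a nonzero constant value $h(O)\in K^{\ast}$. Enlarging $S'$ by the finitely many places of bad reduction, the places in the support of $\mathrm{div}(h(O))$, and the places at which the coefficients of $f$, $x$ and $h$ fail to be integral, I can arrange the following standard reduction-theoretic dichotomy for every $\nu\notin S'$: one has $\nu(x(P))<0$ if and only if $P$ reduces to $O$ modulo $\nu$ (the unique pole of $x$), and at such places $h(P)$ reduces to $\widetilde{h(O)}\neq 0,\infty$, so that $\nu(h(P))=0$, i.e.
\[ 2\,\nu(f(P))=N\,\nu(x(P)). \]
This identity is the technical heart of the argument; I expect its uniform justification to be the main obstacle, namely checking that only finitely many places must be absorbed into $S'$ to force $h$ to be a $\nu$-adic unit at $O$ whenever $P$ specializes to $O$, and that no spurious negative valuations of $x(P)$ survive outside $S'$.

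The rest is an elementary divisibility transfer. Fix $n$ with $n\nmid N p^{s}$ and take $P\in\mathcal{P}(E,K,S',f)_n$ with $P\neq O$. At every $\nu\notin S'$ with $\nu(x(P))<0$ the displayed identity gives $\nu(f(P))<0$ (as $N>0$), hence $n\mid\nu(f(P))$; writing $\nu(f(P))=nk$ and $g=\gcd(2n,N)$, $2n=ga$, $N=gc$ with $\gcd(a,c)=1$, the identity becomes $\nu(x(P))=ak/c$, and integrality forces $c\mid k$, so $a\mid\nu(x(P))$ with $a=2n/\gcd(2n,N)$. As the places with $\nu(x(P))<0$ are exactly these, $P\in\mathcal{P}(E,K,S',x)_a$. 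Finally, a short computation shows $a\mid 2p^{s}$ would give $2n=ga\mid N\cdot 2p^{s}$, i.e.\ $n\mid N p^{s}$; hence $a\nmid 2p^{s}=|\ord_O(x)|\,p^{s}$.

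Putting the pieces together,
\[ \mathcal{P}(E,K,S,f)\subseteq\mathcal{P}(E,K,S',f)=\bigcup_{n\nmid Np^{s}}\mathcal{P}(E,K,S',f)_n\subseteq\bigcup_{a\nmid 2p^{s}}\mathcal{P}(E,K,S',x)_a=\mathcal{P}(E,K,S',x), \]
and the right-hand side is finite by the assumed validity of the theorem for the coordinate function $x$ on the same curve $E$ (whose hypotheses $j_E\notin\F$ and the value of $s$ are unchanged). This proves finiteness of $\mathcal{P}(E,K,S,f)$.
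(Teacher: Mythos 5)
Your proposal is correct in substance and reaches the same pivotal identity as the paper, namely that for all places $\nu$ outside a suitably enlarged finite set $S'$ with $\nu(x(P))<0$ one has $2\,\nu(f(P))=N\,\nu(x(P))$ with $N=-\ord_O(f)$; but you justify it differently. The paper writes $f=(\varphi(x)+y\psi(x))/\eta(x)$, substitutes $x(P)=A/B^2$, $y(P)=C/B^3$, and exhibits two explicit representations of $f(P)$ in which the power of $B$ is split off from a second factor whose numerator and denominator are both coprime to $B$; this is completely self-contained and, as a by-product, makes the needed enlargement of $S$ explicit (leading coefficients become units, coefficients become integral), which matters for the effectivity discussion later in the paper. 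You instead introduce $h=f^2/x^N$ with $\ord_O(h)=0$ and argue via reduction modulo $\nu$ that $\nu(h(P))=0$ whenever $P$ specializes to $O$. That route is cleaner conceptually (it is the spreading-out argument: the closure of $\mathrm{div}(h)$ meets the zero section, and contains vertical components, only over finitely many places, all of which go into $S'$), but as written it is asserted rather than proved --- you correctly flag it as the technical heart, and you should either carry out that model-theoretic argument or fall back on the paper's explicit polynomial computation, which verifies exactly the same statement. On the other hand, your divisibility bookkeeping is more careful than the paper's: from $n\mid\nu(f(P))$ and $2\nu(f(P))=N\nu(x(P))$ you correctly extract $a\mid\nu(x(P))$ with $a=2n/\gcd(2n,N)$ and check $a\nmid 2p^s$ whenever $n\nmid Np^s$, whereas the paper passes from ``$n\mid v(B)\cdot(-\ord_O(f))$ and $n\nmid\ord_O(f)$'' directly to ``$n\mid v(B)$'', an inference that is only valid under the stronger coprimality hypothesis announced at the start of its proof (e.g.\ $n=4$, $-\ord_O(f)=6$, $v(B)=2$ fails it); your gcd argument is the correct repair and covers the full range of $n$ required by the statement of the theorem.
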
 

\begin{proof}  We claim that if $P \in \mathcal{P}(E,K,S,f)_n$ for some $n$ coprime to $\ord_O f$, then we also have that $P \in \mathcal{P}(E,K,S',x)_{n'}$ for some $n'>2=-\ord_O x$ and and extension $S' \supseteq S$, where $x$ is the $x$-coordinate of a Weierstrass model $y^2=x^3+ax+b$. The method of proof is taken from \cite{Jthesis}, 5.2.3 (cf.\ \cite{SilvermanAEC} IX.3.2.2 for a similar reduction in case of Siegel's theorem). 

Write $ f = (\varphi(x) + y \psi(x))/\eta(x)$ for polynomials $\varphi, \psi, \eta \in K[x]$ of respective degrees $d_1, d_2$ and $d_3$. 

First we compute the order of the pole of $f$ at $O$: since $x$ is of order $-2$ and $y$ of order $-3$, we find \begin{equation} \label{ordo} \ord_O(f) = \ord_O(\varphi(x)+y\psi(x))-\ord_O(\eta) =- \max\{2(d_1-d_3),2(d_2-d_3)+3\}. \end{equation}

Enlarge $S$ so that $a,b$ and all coefficients of these three polynomials are $S'$-integers and their leading coefficients are $S'$-units, keeping $\cO_{K,S'}$ a PID. If we write $x(P)=(A/B^2,C/B^3)$ in $S$-integers $A,B,C$ with $B$ coprime to $AC$, then we have the following two expressions for $f(P)$: 
\begin{align} f(P) & \label{rep1} = \frac{1}{B^{3+2(d_2-d_3)}} \cdot \frac{B^{3+2(d_2-d_1)} \left( B^{2d_1} \varphi(A/B^2)\right)+ C \left(B^{2 d_2} \psi(A/B^2)\right)} {B^{2d_3} \eta(A/B^2)} \\ &= \label{rep2}
 \frac{1}{B^{2(d_1-d_3)}} \cdot \frac{B^{2(d_1-d_2)-3} \left(C B^{2 d_2} \psi(A/B^2)\right)+\left(B^{2d_1} \varphi(A/B^2)\right)} {B^{2d_3} \eta(A/B^2)} 
 \end{align}
 
First, suppose that in (\ref{ordo}), $-\ord_O(f) =2(d_2-d_3)+3>0$, or, equivalently, $3+2(d_2-d_1)>0$. Then in the first representation of $f(P)$ in (\ref{rep1}) we find that $B$ is coprime to the numerator and denominator of the second factor. Assume that $v \notin S'$ with $v(x(P))<0$, i.e., $v(B)>0$. Then $$v(f(P)) = -(3+2(d_2-d_3))v(B)<0,$$ and from $P \in \mathcal{P}(E,K,S,f)_n$ we conclude that $n \mid v(f(P))$, i.e., $$n \mid v(B) \cdot (3+2(d_2-d_3)).$$ The hypothesis $n \nmid \ord_O(f)$ implies that $n \mid v(B)$, i.e., $P \in \mathcal{P}(E',K,S',x)_{2n}$ with $n>1$ (i.e., $2n \nmid \ord_O(x)=-2$). 

Secondly, if in  (\ref{ordo}), $-\ord_O(f) =2(d_1-d_3)>0$, or, equivalently, $2(d_1-d_2)-3>0$, then  in the second representation of $f(P)$ in (\ref{rep2}) we find that $B$ is coprime to the numerator and denominator of the second factor. Assume that $v \notin S'$ with $v(x(P))<0$, i.e., $v(B)>0$. Then $$v(f(P)) = -2(d_1-d_3))v(B)<0,$$ and from $P \in \mathcal{P}(E,K,S,f)_n$ we conclude that $n \mid v(f(P))$, i.e., $$n \mid v(B) \cdot 2(d_1-d_3).$$ The hypothesis $n \nmid \ord_O(f)$ implies that $n \mid v(B)$, i.e., $P \in \mathcal{P}(E',K,S',x)_{2n}$ with $n>1$ (i.e., $2n \nmid \ord_O(x)=-2$). 
\end{proof}

\section{Bounding the exponent} 

Without loss of generality, we assume that $E$ is given by a Weierstrass equation in short form $y^2=x^3+ax+b$, with $j_E \notin K^p$ and $f=x$. For the next reduction, we take our inspiration from Bennett and Dahmen (\cite{BenDah}, Section 2) in using a classical syzygy for binary cubic forms, applied to the 2-division polynomial.  

\begin{proposition} \label{red3}
If $P \in \mathcal{P}(E,K,S,x)_n \neq \emptyset$, then, up to replacing $K$ by a sufficiently large separable extension and enlarging $S$ so that $\mathcal{O}_{K,S}$ is a PID, there exists a solution to
$$  X^3+Y^2=Z^{4n}   $$
with $X,Y,Z \in \cO_{K,S}$ pairwise coprime and $\nu(Z)=0$ for all $\nu \in S$, with $B_P=Z^n v$ for some $S$-unit $v$, where $x(P)=A_P/B_P^2$ is a representation in coprime $S$-integers.  
\end{proposition}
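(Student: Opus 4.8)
The plan is to convert the Weierstrass relation into the binary-cubic syzygy and read off the power of $B$ from it. Writing the point in coprime $S$-integers as $x(P)=A/B^2$ and $y(P)=C/B^3$, the equation $y^2=x^3+ax+b$ gives
\[ C^2=A^3+aAB^4+bB^6=F(A,B^2),\qquad F(u,v):=u^3+auv^2+bv^3, \]
where $F$ is the homogenised $2$-division polynomial. First I would record what the hypothesis buys: at each $\nu\notin S$ where $x(P)$ has a pole one has $\nu(x(P))=-2\nu(B)$, so $P\in\mathcal P(E,K,S,x)_n$ forces $n\mid\nu(B)$ at every such place (the factor $-\ord_O x=2$ being harmless here). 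After enlarging $S$ so that $\cO_{K,S}$ is a PID and so that $a$, $b$ and $\mathrm{disc}(F)$ become $S$-units, this is exactly the asserted factorisation $B=vZ^n$ with $v\in\cO_{K,S}^*$ and $\nu(Z)=0$ on $S$; in particular $B^2$ is a perfect $2n$-th power up to $S$-units.

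To the cubic $F$ I would attach its classical covariants, the Hessian $\mathfrak H$ and the Jacobian cubicovariant $\mathfrak G$, with the syzygy $\mathfrak G^2=4\mathfrak H^3-27\,\mathrm{disc}(F)\,F^2$. Evaluating this at $(A,B^2)$ already gives a ternary equation, but with $F(A,B^2)^2=C^4$ on the right — only the Siegel--Voloch case $X^3+Y^2=Z^4$ with $Z\sim C$. To see the power of $B$ instead, I would pass to the splitting field of $F$ (separable, since $p\ge5$ and $\mathrm{disc}(F)\ne0$, and still called $K$) and use $C^2=\prod_i(A-e_iB^2)=\prod_i w_i$ with the $w_i$ pairwise coprime away from $S$, since $w_i-w_j=(e_j-e_i)B^2$ is coprime to each $w_i$ after the enlargement of $S$; hence each $w_i$ is an $S$-unit times a square. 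A clean source of a fourth power of $B$ is then, e.g.,
\[ (w_1-w_2)(w_1-w_3)=(e_2-e_1)(e_3-e_1)\,B^4, \]
an $S$-unit multiple of $B^4=(B^2)^2$. My aim would be to rewrite this product, using the covariant forms of the $2$-division polynomial applied to the coprime-square data $w_i$, as a sum $X^3+Y^2$ with $X,Y\in\cO_{K,S}$; substituting $B=vZ^n$ then turns the right-hand side into an $S$-unit times $Z^{4n}$, and absorbing the unit and the constant $27\,\mathrm{disc}(F)$ into $X,Y,Z$ costs only a cube root and some square roots, again separable for $p\ge5$.

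Pairwise coprimality of $X,Y,Z$ away from $S$ would come from the resultant divisibilities $\mathrm{Res}(F,\mathfrak H),\ \mathrm{Res}(F,\mathfrak G),\ \mathrm{Res}(\mathfrak H,\mathfrak G)\mid\mathrm{disc}(F)^{k}$: any prime dividing two of the three terms divides $\mathrm{disc}(F)$ and so lies in $S$, after which $\nu(Z)=0$ on $S$ is automatic. The step I expect to be the real obstacle is precisely the middle one: producing the $4n$-th power of $B$ (and not the fourth power $C^4$ of the $y$-numerator that the bare syzygy yields) as a genuine coprime sum of a cube and a square. This is exactly where the hypothesis $P\in\mathcal P_n$ — through $B^2$ being a $2n$-th power up to units — has to be threaded in, and it is what forces the argument through the $2$-descent factorisation $C^2=\prod_i w_i$ rather than a single evaluation of the syzygy. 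Controlling the $S$-units in $w_i$, the factor $-\ord_O x=2$, and the auxiliary separable extensions without introducing inseparability (the reason for $p\ge5$) is the delicate bookkeeping that remains.
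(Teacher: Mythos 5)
There is a genuine gap, and you have in fact located it yourself: your argument never actually produces the equation $X^3+Y^2=Z^{4n}$. Working with $P$ directly, the Weierstrass relation gives $F(A_P,B_P^2)=C_P^2$, so the Eisenstein syzygy evaluated at $(A_P,B_P^2)$ puts $C_P^4$ on the right-hand side --- the exponent $4$ and not $4n$, as you correctly observe. Your proposed repair, extracting the $S$-unit multiple of $B_P^4$ from the product $(w_1-w_2)(w_1-w_3)$ in the Siegel factorisation and then ``rewriting this product as a sum $X^3+Y^2$,'' is not carried out, and there is no covariant identity that does this: the syzygy is attached to the binary cubic $F$ evaluated at a coprime pair, and the differences $w_i-w_j$ of the Siegel factors are not values of such a form. (The Siegel factorisation $C_P^2=\prod_i(A_P-e_iB_P^2)$ is indeed used in the paper, but in the later Proposition 4.1, to bound the solutions for each \emph{fixed} $n$ via Siegel's identities --- not to manufacture the ternary equation.)

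The missing idea is to replace $P$ by a half-point. After a separable extension $K'$ with $E(K)\subseteq 2E(K')$ (degree a product of powers of $2$ and $3$, hence separable for $p\ge5$), write $P=2Q$. The duplication formula
\[
x(P)=\frac{B_Q^8\,\vartheta_2(A_Q/B_Q^2)}{B_Q^2\,\psi_2^2(A_Q/B_Q^2)\,B_Q^6}
\]
exhibits the \emph{denominator} $B_P^2$ of $x(P)$ --- up to factors dividing $\Delta_E$, which are pushed into $S$ --- as $K_2(A_Q,B_Q^2)$, the value of the binary cubic $2$-division form $K_2(X,Y)=4(X^3+aXY^2+bY^3)$ at the coprime pair $(A_Q,B_Q^2)$. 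Since the hypothesis gives $B_P=uC^n$ up to $S$-units, the syzygy $G^2+4H^3=-27\Delta_E\,K_2^2$ evaluated at $(A_Q,B_Q^2)$ now has $K_2(A_Q,B_Q^2)^2\sim B_P^4\sim C^{4n}$ on the right, which after absorbing $S$-unit coefficients (finitely many classes modulo sixth powers, by the $S$-unit theorem, handled by a further separable extension) is exactly $X^3+Y^2=Z^{4n}$. Your remaining ingredients --- the resultant argument for pairwise coprimality, the normalisation $\nu(Z)=0$ on $S$ via uniformisers in the PID $\cO_{K,S}$, and the separability bookkeeping --- are correct and match the paper, but without the passage to $Q$ the exponent $4n$ never appears.
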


\begin{proof} There exists a finite separable extension $K'$ of $K$ such that $E(K) \subseteq 2E(K')$: it suffices to let $K'$ contain the coordinates of the solutions $Q$ to the equations $P=2Q$ for $P$ running through a finite set of generators for $E(K)$ (this can also be done without halving generators, see Remark \ref{isogeny} below). Separability of $K'/K$ follows from the fact that the degree of $K'/K$ is only divisible by powers of $2$ and $3$, and we assume $p \geq 5$. 

Replace $K$ by $K'$.  Without loss of generality, enlarge $S$ so that it contains all divisors of the discriminant $\Delta_E$ of $E$, and such that the coefficients of the Weierstrass model of $E$ are in  $\mathcal{O}_{K,S}$ and $\mathcal{O}_{K,S}$ is a principal ideal domain. Suppose that $P \in \mathcal{P}(E,K,S,x)$, and write  $2Q=P$ with $Q \in E(K)$, where $x(Q)=A_Q/B_Q^2$ with 
$A_Q, B_Q$ coprime in $\mathcal{O}_{K,S}$. Then
\[
\frac{A_P}{B_P^2}=\frac{B_Q^8\vartheta_2(A_Q/B_Q^2)}{B_Q^2\psi_2^2(A_Q/B_Q^2)B_Q^6}.
\]
where $$\vartheta_2(x) = x^4-2ax^2-8bx+a^2 \mbox{ and } \psi_2^2(x) = 4(x^3+ax+b)$$ are classical division polynomials. This gives a representation of $x(Q)$ in which numerator and denominator are in $\cO_{K,S}$, and (cf.\ e.g., Ayad \cite{MR1185022}) the greatest common divisors of numerator and denominator divides the discriminant $\Delta_E$ of $E$. Furthermore, the factors $B_Q^2$ and $\psi_2^2(A_Q/B_Q^2)B_Q^6$ are coprime. 

Consider the binary cubic form $$K_2(X,Y)=4(X^3+aXY^2+bY^3).$$ A classical result, a ``syzygy for the covariants'', apparently first discovered by Eisenstein \cite{Eisenstein} (cf.\ \cite{HM}), says the following: 
\begin{lemma}
If $F$ is a binary cubic form with discriminant $\Delta_F$, set $$H(x,y) = \frac{1}{4} \det \left( \begin{array}{cc} \frac{\partial^2 F}{\partial x \partial x} &  \frac{\partial^2 F}{\partial x \partial y} \\  \frac{\partial^2 F}{\partial x \partial y} &  \frac{\partial^2 F}{\partial y \partial y}  \end{array} \right) \mbox{ and } G(x,y) = \det \left( \begin{array}{cc} \frac{\partial F}{\partial x} &  \frac{\partial F}{\partial y} \\  \frac{\partial H}{\partial x} &  \frac{\partial H}{\partial y}  \end{array} \right).$$ Then
\begin{equation} \label{sy}  G^2 + 4H^3 =  - 27 \Delta_F F^2. \qed \end{equation}  
\end{lemma}

Now if $P \in \mathcal{P}(E,K,S,x)_n$ for some $n>2$, then $B_P=uC^n$ where $\nu(u)=0$ for $\nu \notin S$. We see that  
\[
K_2(A_Q,B_Q^2)=\psi_2^2(A_Q/B_Q^2)B_Q^6=u^2C^{2n}/\delta,   
\]
with $\nu(\delta) \neq 0$ only for the finitely many valuations $\nu$ for which $\nu(\Delta_E) \neq 0$, which are included in $S$. 
 
The syzygy (\ref{sy}) for $F=K_2$ (with $\Delta_F=\Delta_E$) gives an equation of the form $$aX^3+bY^2=Z^{4n},$$ where $X,Y,Z \in  \mathcal{O}_{K,S}$ are non-zero and $a,b$ are $S$-units with 
$$ a=-\frac{\delta}{27u^4\Delta_E},\ b=-\frac{4\delta}{27u^4\Delta_E},\ X=G(A_Q,B^2_Q),\ Y=H(A_Q,B_Q^2) \mbox{ and } Z=C.$$

Since the resultant of any pair of $F,G$ and $H$ is a divisor of $\Delta_E^3$ (as can be seen by direct computation, or as in Prop.\ 2.1 in \cite{BenDah}), we find that the only common divisors of any pair of $X,Y$ and $Z$ belongs to $S$, i.e., $X, Y$ and $Z$ are pairwise coprime $S$-integers. Furthermore, if $\nu(Z) \neq 0$ for some $\nu \in S$, fix a uniformizer  $\pi_{\nu}\in \cO_{K,S}$ for $\nu$ (this is possible since we assume $\cO_{K,S}$ is a PID), and replace the equation by $$a'X^3+b'Y^2=(Z')^{4n}$$ with $$a'=\pi_{\nu}^{-4n\nu(Z)} a, \  b'=\pi_{\nu}^{-4n\nu(Z)} b \mbox{  and }Z'=\pi_{\nu}^{-\nu(Z)} Z;$$ then the new equation has has $\nu(Z')=0$. Doing this for all such (finitely many) valuations, we may assume $\nu(Z)=0$ for $\nu \in S$. Note that $B_P=Z^n v$ for an $S$-unit $v$. 

Dirichlet's $S$-unit theorem for function fields (due to F.K.\ Schmidt, cf.\ e.g.\ \cite{Rosen}, 14.2) shows that there are only finitely many values of $a$ and $b$ up to sixth powers, so we can enlarge $K$ to contain the relevant sixth roots (separable since $p \geq 5$) to find a solution in $K$ to $X^3+Y^2=Z^{4n}$ with $X,Y,Z$ coprime  $S$-integers and $\nu(Z)=0$ for all $\nu \in S$. 
\end{proof} 

\begin{remark} \label{isogeny}
In explicit bounds, the following observation might be useful. The extension $K'/K$ such that $E(K) \subseteq 2E(K')$ that is needed at the start of the proof can be constructed independently of choosing generators for $E(K)$: if $P=(x(P),y(P))$ satisfies the Weierstrass equation, we find that 
$$ \prod_{T \in E[2]-O} (x(P)-x(T)) = y(P)^2 $$ is a square. Extend $S$ so that $\mathcal{O}_{K,S}$ is a PID. Now the common divisors of the factors on the left hand side divide $\Delta_E$. Therefore, if we extend $K$ to $K'$ so that all prime divisors of $\Delta_E$ and all elements of $\cO^*_{K,S}$ (in which squares have finite index by Dirichlet's unit theorem) become squares in $K'$, then all $x(P)-x(T)$ are squares in $K'$. Now the explicit formula for the 2-isogeny $[2] \colon E \mapsto E$ implies that $E(K) \subseteq 2E(K')$. 
\end{remark}

\begin{se}
 The (logarithmic) height of $x \in K$ is defined by 
$$ h(x) = - \sum_{\substack{\nu \in M_K \\ \nu(x)<0}} \nu(x). $$ 
Note that $h(x) \ge 0$  and $h(x) \in \Z$ for all $x \in K$. 
Let 
\[
h(x)_0 = \sum_{\substack{\nu \in M_K \\ \nu(x)>0}} \nu(x).
\]
Note that $\nu(1/x)=-\nu(x)$, so $\nu(x)>0$ if and only if $\nu(1/x)<0$. Thus, by the product formula, 
\begin{lemma} \label{l1}
For all $x \in K$, $h(1/x)=h(x)_0=h(x)$. \qed
\end{lemma}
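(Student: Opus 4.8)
The plan is to prove the two equalities separately, with the product formula doing all the real work in the second one. The first equality $h(1/x)=h(x)_0$ is essentially the observation recorded just before the statement: since $\nu(1/x)=-\nu(x)$ for every $\nu \in M_K$, the set of places at which $1/x$ has negative valuation is exactly the set of places at which $x$ has positive valuation, and on that set $-\nu(1/x)=\nu(x)$. Substituting into the definition of $h(1/x)$ then gives $h(1/x)=-\sum_{\nu(1/x)<0}\nu(1/x)=\sum_{\nu(x)>0}\nu(x)=h(x)_0$, a purely formal manipulation requiring no input beyond the definitions.

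For the second equality $h(x)_0=h(x)$ I would invoke the product formula for the global function field $K/\F$, which asserts that $\sum_{\nu \in M_K}\nu(x)=0$ for every nonzero $x \in K$ (in the normalisation of places implicit in the definition of $h$). I would then split this single sum according to the sign of $\nu(x)$; the places with $\nu(x)=0$ contribute nothing, leaving
\[ 0=\sum_{\nu \in M_K}\nu(x)=\sum_{\nu(x)>0}\nu(x)+\sum_{\nu(x)<0}\nu(x)=h(x)_0-h(x). \]
Rearranging yields $h(x)=h(x)_0$, and chaining this with the first equality gives $h(1/x)=h(x)_0=h(x)$.

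The argument has no genuine obstacle: the entire arithmetic content is the product formula (valid here because the field of constants $\F$ is finite), and everything else is bookkeeping with signs. The only point demanding a moment's care is to keep the normalisation of the valuations consistent — that is, to ensure that whatever degree factor one attaches to each place in the product formula is the same factor implicit in the definitions of $h(x)$ and $h(x)_0$ — so that the split sum telescopes into exactly $h(x)_0-h(x)$ rather than some degree-weighted variant. With that convention fixed at the outset, the proof reduces to the two displayed lines.
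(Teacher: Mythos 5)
Your proof is correct and is essentially the paper's own argument: the lemma is stated with an immediate \verb|\qed| precisely because the two observations you make --- that $\nu(1/x)=-\nu(x)$ identifies the poles of $1/x$ with the zeros of $x$, and that the product formula forces $h(x)_0-h(x)=0$ --- are recorded in the sentences just before it. Your added caveat about keeping the normalisation of the places consistent between the definition of $h$ and the product formula is the one genuine point of care, and you handle it appropriately.
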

\end{se}

We will apply the following theorem of Mason's (the ``abc-conjecture for function fields''): 
\begin{theorem}[Mason \cite{Mason}] \label{mason} Suppose that $\gamma_1, \gamma_2$ and $\gamma_3$ are non-zero elements of $K$ with 
$\gamma_1+\gamma_2+\gamma_3=0$ and $\nu(\gamma_1)=\nu(\gamma_2)=\nu(\gamma_3)$ for each valuation $\nu$ not in a finite set $T$. Then either $\gamma_1/\gamma_2 \in K^p$ or 
\[
h(\gamma_1/\gamma_2) \le |T| +2g_K-2. \qed
\] 
\end{theorem}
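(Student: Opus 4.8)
The plan is to reduce the three-term relation to a single equation and then read off $h(\gamma_1/\gamma_2)$ as the degree of a map to the projective line, to which I would apply Riemann--Hurwitz. Dividing $\gamma_1+\gamma_2+\gamma_3=0$ by $\gamma_3$ and setting $\xi=\gamma_1/\gamma_3$, $\eta=\gamma_2/\gamma_3$ gives $\xi+\eta=-1$ and $\gamma_1/\gamma_2=\xi/\eta$. The hypothesis $\nu(\gamma_1)=\nu(\gamma_2)=\nu(\gamma_3)$ for $\nu\notin T$ says exactly that $\xi$ and $\eta$ are $T$-units, so every zero and pole of $\xi$, and every zero of $\eta$, lies in $T$. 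A short valuation-by-valuation check using $\eta=-1-\xi$ shows $\nu(\xi/\eta)>0$ precisely at the zeros of $\xi$ and $\nu(\xi/\eta)<0$ precisely at the zeros of $\eta$, so by Lemma~\ref{l1}
\[
h(\gamma_1/\gamma_2)=h(\xi/\eta)=h(\xi)=\deg\xi,
\]
the degree of $\xi$ as a rational function. I would also record the characteristic-$p$ dichotomy at this point: since $\F$ is perfect, a nonconstant element of $K$ is a separating variable iff it does not lie in $K^p$, and because $(w+1)^p=w^p+1$ one checks that $\gamma_1/\gamma_2\in K^p\iff\xi\in K^p\iff\eta\in K^p$ (if $\xi/\eta=w^p$, then $\eta=-1/(w+1)^p$). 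Hence in the non-exceptional case I may assume $\xi\notin K^p$, i.e.\ $\xi$ is nonconstant and separating.

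With heights and $|T|$ understood geometrically (equivalently, after the harmless replacement of $\F$ by its algebraic closure, which preserves $g_K$, preserves the condition $\xi\notin K^p$, and makes every place have degree one), I would view $\xi$ as a finite separable morphism $\phi=\xi\colon C\to\mathbf{P}^1$, where $C$ is the smooth projective model of $K$ of genus $g_K$ and $d:=\deg\phi=h(\xi)$. Separability is exactly the content of $\xi\notin K^p$.

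The heart of the argument is Riemann--Hurwitz applied to the three fibres over $0$, $\infty$ and $-1$. For a point $Q\in\mathbf{P}^1$ one has $\sum_{P\mapsto Q}e_P=d$, and separability yields $\deg R\ge\sum_P(e_P-1)$ for the ramification divisor $R$, so
\[
2g_K-2=-2d+\deg R\ge -2d+\sum_{Q\in\{0,\infty,-1\}}\bigl(d-\#\phi^{-1}(Q)\bigr)=d-\sum_{Q\in\{0,\infty,-1\}}\#\phi^{-1}(Q).
\]
The fibres $\phi^{-1}(0)$, $\phi^{-1}(\infty)$, $\phi^{-1}(-1)$ are the zeros of $\xi$, the poles of $\xi$, and the zeros of $\eta$; being preimages of three distinct points they are pairwise disjoint, and all of them lie in $T$, so their total cardinality is at most $|T|$. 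Rearranging gives $h(\gamma_1/\gamma_2)=d\le |T|+2g_K-2$, as required.

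The step I expect to be the real obstacle is pinning down precisely where the $K^p$-exception enters: the inequality $\deg R\ge\sum_P(e_P-1)$ fails for inseparable $\phi$, so one must argue carefully, via the equivalences above and the perfectness of $\F$, that $\gamma_1/\gamma_2\notin K^p$ forces $\xi$ to be separating, and one must dispose of the degenerate case $\xi$ constant (then $\xi\in\F\subseteq K^p$, so it falls under the excluded alternative). A secondary bookkeeping point is the normalisation of $h$ and $|T|$ under the constant-field extension, which I would handle by working with the geometric (degree-weighted) quantities throughout so that $h(\xi)=\deg\phi$ exactly. Alternatively, one can avoid the geometric language and run Mason's original Wronskian computation: fixing a separating $t$ with $D=d/dt$ (so $\ker D=K^p$), the nonzero Wronskian $W(\xi,\eta)=D\xi$ has its zero divisor controlled by the different, reproducing the same estimate, with the separability input reappearing as the nonvanishing $D\xi\neq0$.
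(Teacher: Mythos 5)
The paper does not actually prove this statement: it is quoted from Mason's book with a \verb|\qed|, so there is no internal proof to compare against. Your argument is a correct, self-contained proof, and it follows the geometric (Stothers-style) route rather than Mason's own: you convert the $T$-unit relation into a morphism $\phi=\xi\colon C\to\PP^1$ and apply Riemann--Hurwitz to the three fibres over $0,\infty,-1$, whereas Mason's cited proof runs the Wronskian/derivation computation that you only sketch as an alternative at the end. The two are essentially dual (the inequality $\deg R\ge\sum_P(e_P-1)$ is the zero-divisor estimate for $D\xi$ in disguise), but your version makes the role of separability transparent --- the $K^p$ alternative is exactly the failure of $\phi$ to be separable, and your reductions ($\gamma_1/\gamma_2\in K^p\iff\xi\in K^p\iff\eta\in K^p$, and the constant case falling into the excluded alternative) are all correct. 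The one point you flag but do not fully close is real, though it is as much a defect of the paper's formulation as of your proof: passing to $\bar\F$ turns a place of degree $d$ into $d$ points, so what Riemann--Hurwitz literally yields is $\deg\phi\le\sum_{\nu\in T}\deg\nu+2g_K-2$, i.e.\ the degree-weighted height bounded by the \emph{degree} of $T$ rather than its cardinality. This is the correct and standard form of Mason's inequality over a finite constant field, and it is the form consistent with the paper's later use of $h(A/B)=\max\{\deg A,\deg B\}$ on $\F_q(t)$; but it does not literally coincide with ``$|T|$'' read as the number of places together with the unweighted sum defining $h$. You should either state the conclusion with $\deg T$ in place of $|T|$, or note explicitly that the valuations are normalised so that places are counted with their degrees --- as written, the final sentence silently identifies the two.
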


\begin{proposition} \label{3term} If $X,Y,Z \in \cO_{K,S}$ are pairwise coprime $S$-integers with $\nu(Z)=0$ for all $\nu \in S$, $Z \notin \F$  and $X^3/Z^N \notin K^p$, that satisfy an equation of the form 
$$  X^3+Y^2=Z^{N}   $$ for $N \geq 1$, then $N \leq C'$ for some constant $C'$ that depends on $K$ and $S$ only.
\end{proposition}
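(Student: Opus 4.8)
The plan is to apply Mason's theorem (Theorem~\ref{mason}) to the three-term relation $\gamma_1+\gamma_2+\gamma_3=0$ with
\[
\gamma_1=X^3,\qquad \gamma_2=Y^2,\qquad \gamma_3=-Z^{N},
\]
and to exploit that the exponent triple $(3,2,N)$ satisfies $1/3+1/2<1$ to turn the resulting estimate into a self-improving inequality bounding $N$. The conceptual heart is exactly this ``Fermat--Catalan'' numerology: since $1/3+1/2+1/N<1$ as soon as $N\geq 7$, the radical contributions coming out of Mason's bound can be absorbed back into the heights they are built from, leaving a bound on $N$ alone.

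First I would control the exceptional set $T$ of valuations at which the three terms fail to have a common value. Because $X,Y,Z$ are pairwise coprime $S$-integers, at every $\nu\notin S$ at most one of them has positive valuation, so $\nu(\gamma_1)=\nu(\gamma_2)=\nu(\gamma_3)=0$ unless $\nu$ lies in one of the (disjoint) zero loci of $X$, $Y$, $Z$. Writing $n_0(W)$ for the number of distinct zeros of $W$, this gives $|T|\le |S|+n_0(X)+n_0(Y)+n_0(Z)$, and since a distinct-zero count never exceeds the degree of the zero divisor, $n_0(W)\le h(W)$ by Lemma~\ref{l1}.

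Next I would dispose of the exceptional $K^p$-alternative in Mason's theorem, and this is where the hypothesis $X^3/Z^N\notin K^p$ enters. The three ratios $\gamma_i/\gamma_j$ are either all in $K^p$ or none is: if $\gamma_1/\gamma_2=w^p$ then, using the characteristic-$p$ identity $w^p+1=(w+1)^p$ together with $-1\in K^p$, one finds $\gamma_3/\gamma_2=-(w+1)^p\in K^p$, hence all ratios lie in $K^p$; the same argument is symmetric in the indices. Since $\gamma_1/\gamma_3=-X^3/Z^N\notin K^p$ by hypothesis, no ratio is a $p$-th power, so Mason's theorem applies with every labelling and yields $h(\gamma_i/\gamma_j)\le |T|+2g_K-2$ for all $i\neq j$. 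Using coprimality together with the normalization $\nu(Z)=0$ on $S$ (which places all zeros of $Z$ outside $S$, after the reductions of Proposition~\ref{red3}), the height of a coprime ratio dominates the heights of its numerator and denominator, $h(\gamma_i/\gamma_j)\ge\max\{h(\gamma_i),h(\gamma_j)\}$, so that, setting $R:=|T|+2g_K-2$,
\[
3\,h(X)\le R,\qquad 2\,h(Y)\le R,\qquad N\,h(Z)\le R.
\]

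Finally I would combine the two inputs. Writing $x=h(X)$, $y=h(Y)$, $z=h(Z)$ and $c_0=|S|+2g_K-2$, the radical estimate reads $R\le x+y+z+c_0$, while the three Mason inequalities give $x\le R/3$, $y\le R/2$, $z\le R/N$. Substituting yields $R\bigl(1/6-1/N\bigr)\le c_0$, so for $N>6$ we get $R\le 6Nc_0/(N-6)$; then $N\,z\le R$ together with $z=h(Z)\ge 1$ (valid because $Z\notin\F$ is non-constant) forces $N-6\le 6c_0$, i.e. $N\le 6(|S|+2g_K-1)$, a constant depending only on $K$ and $S$. I expect the one genuine obstacle here to be bookkeeping rather than conceptual: making the height comparisons $h(\gamma_i/\gamma_j)\ge\max\{h(\gamma_i),h(\gamma_j)\}$ and the radical bound exact in the presence of the finite set $S$ (avoiding spurious cancellation of poles at $S$) is precisely what the coprimality hypothesis and the $S$-normalization from Proposition~\ref{red3} are there to guarantee.
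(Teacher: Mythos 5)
Your proof is correct and follows essentially the same route as the paper: Mason's theorem applied to the relation $X^3+Y^2-Z^N=0$ with $T=S\cup\{\nu\colon \nu(XYZ)>0\}$, the height identities giving $3h(X),\,2h(Y),\,Nh(Z)\le |T|+2g_K-2$ (which, as you rightly flag, rest on the normalization $\nu(Z)=0$ for $\nu\in S$ and on pairwise coprimality), and the $1/2+1/3+1/N<1$ bookkeeping. The one place you diverge is the endgame, and yours is cleaner: feeding $N\le Nh(Z)\le R$ (valid since $h(Z)\ge 1$ for $Z\notin\F$) back into $R(1/6-1/N)\le c_0$ yields the explicit bound $N\le 6(|S|+2g_K-1)$ in one step, whereas the paper splits into the cases $\Sigma>11C$ and $\Sigma\le 11C$ and in the latter must bound $N$ by a less explicit maximum over the finitely many small triples; you also make explicit the ``all three ratios $\gamma_i/\gamma_j$ are $p$-th powers or none is'' observation, which the paper uses only tacitly when it passes from the hypothesis $X^3/Z^N\notin K^p$ to the condition $X^3/Y^2\notin K^p$ required by Mason's theorem.
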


\begin{proof} 
Mason's theorem \ref{mason} applied to $\{\gamma_1,\gamma_2,\gamma_3\} = \{X^3/Z^N, Y^2/Z^N,1\}$ in all combinations, with $T=S \cup \{ \nu \colon \nu(X)>0 \mbox{ or } \nu(Y)>0 \mbox{ or } \nu(Z)>0\}$  implies: if $X^3/Y^2 \notin K^p$, then 
\begin{equation} \label{MT}
\max \{ h(X^3/Z^N), h(Y^2/Z^N), h(X^3/Y^2) \} \le 2g_K-2+|S|+h_0(XYZ),
\end{equation}
Using Lemma \ref{l1} and the fact that we are assuming  $\nu(Z)=0$ for all $\nu \in S$, we find
\begin{eqnarray*}
h(X^3/Z^N) &=& -\sum_{\substack{\nu \in S \\ \nu(X^3/Z^N)<0}} \nu(X^3/Z^N)+N\sum_{\substack{\nu \notin S \\ \nu(Z) >0}} \nu(Z)  \\
&=&-\sum_{\substack{\nu \in S \\ \nu(X^3)<0}} \nu(X^3)+Nh(Z) 
\end{eqnarray*}
and also
\begin{eqnarray*}
h(X^3/Z^N) &=& h(Z^N/X^3) \\
&=&  -\sum_{\substack{\nu \in S \\ \nu(Z^N/X^3)<0}} \nu(Z^N/X^3)+3\sum_{\substack{\nu \notin S \\ \nu(X)>0}} \nu(X) \\
&=& 3\sum_{\substack{\nu \in S \\ \nu(X)>0}} \nu(X) + 3\sum_{\substack{\nu \notin S \\ \nu(X)>0}} \nu(X) \\ &=& 3h(X).    
\end{eqnarray*}
Thus, $$h(X^3/Z^N)=3h(X) \ge Nh(Z).$$ Similarly,  we find $$h(Y^2/Z^N)=2h(Y) \ge Nh(Z).$$ We also have $$h(X^3/Y^2) \ge \max\{2h(Y), 3h(X)\}.$$ Combining this with the estimate (\ref{MT}) from Mason's Theorem yields  
\begin{equation} \label{MT2}
\max \{ 3h(X), 2h(Y), Nh(Z) \} \le 2g_K-2+|S|+h(X)+h(Y)+h(Z). 
\end{equation} 
Let $$\Sigma=\Sigma_{X,Y,Z}=h(X)+h(Y)+h(Z)$$ and $$C=C_{K,S}=2g_K-2+|S|.$$
From (\ref{MT2}), we find inequalities 
$$ h(X) \le \frac{1}{3} (\Sigma+C) \mbox{ and } h(Y) \le \frac{1}{2} (\Sigma+C) \mbox{ and } h(Z) \le \frac{1}{N} (\Sigma+C), $$
which add up to 
\[
\Sigma \le \left(\frac{1}{2} + \frac{1}{3} + \frac{1}{N}  \right) (\Sigma + C),
\]
or
\begin{equation} \label{esti}
\frac{1}{N} \ge \frac{1}{1+\frac{C}{\Sigma}}-\frac{5}{6}.
\end{equation}
Now there are two possibilities:
\begin{enumerate}
\item[Case 1.] $\Sigma > 11C$. From (\ref{esti}) it follows that $N < 12$.
\item[Case 2.] $\Sigma \le 11C$. Since $h(X), h(Y), h(Z) \in \Z$ are positive and bounded above by the constant $11C$ that depends only on $K$ and $S$, there must be finitely many choices for $X$, $Y$ and $Z$. Since $Z \notin \F$, $h(Z)>0$. Hence we find a bound on $N$, since $$N \leq Nh(Z) = h(Z^N) \leq \max\{ h(X^3+Y^2) \colon h(X)+h(Y) \leq 11C\}.$$ 
\end{enumerate}
\end{proof}

\begin{corollary} \label{f1} Assume $j_E \notin K^p$. 
There exists a constant $\tilde C$ only depending on $E,K$ and $S$ such that  $$\mathcal{P}(E,K,S,x) \subseteq \bigcup_{ n \leq \tilde C} \mathcal{P}(E,K,S,x)_n.$$ 
\end{corollary}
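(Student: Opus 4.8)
The plan is to bound the exponent $n$ occurring for any $P \in \mathcal{P}(E,K,S,x)$ by assembling Propositions \ref{red3} and \ref{3term}; the constant $\tilde C$ will be built from the constant $C'$ of Proposition \ref{3term} together with $p$. Fix $P \in \mathcal{P}(E,K,S,x)_n$ with $n \nmid 2$. If $p \mid n$, then $\mathcal{P}(E,K,S,x)_n \subseteq \mathcal{P}(E,K,S,x)_p$, so $P$ already lies in $\mathcal{P}(E,K,S,x)_p$; choosing $\tilde C \geq p$ captures all such $P$. I may therefore assume $p \nmid n$.

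With $p \nmid n$, I would apply Proposition \ref{red3} (available since $j_E \notin K^p$) to obtain, after its separable enlargement of $K$ and enlargement of $S$ to a PID, pairwise coprime $S$-integers $X,Y,Z$ with $\nu(Z)=0$ for $\nu \in S$, satisfying $X^3+Y^2=Z^{4n}$ and $B_P=Z^n v$ for an $S$-unit $v$. The goal is then to invoke Proposition \ref{3term} with $N=4n$, which gives $4n \leq C'$, i.e.\ $n \leq C'/4$. This requires its two hypotheses, $Z \notin \F$ and $X^3/Z^{4n} \notin K^p$, and the whole difficulty is to show that when either fails, $P$ is already accounted for by a bounded exponent.

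If $Z \in \F$, then $B_P=Z^n v$ is an $S$-unit, so $x(P)$ is $S$-integral and no $\nu \notin S$ satisfies $\nu(x(P))<0$; thus $P$ lies vacuously in $\mathcal{P}(E,K,S,x)_m$ for every $m$, in particular for $m=1$. If $X^3/Z^{4n}\in K^p$, I would pass to valuations: for $\nu \notin S$ dividing $Z$, coprimality gives $\nu(X)=0$, so $\nu(X^3/Z^{4n})=-4n\,\nu(Z)$, and membership in $K^p$ forces $p \mid 4n\,\nu(Z)$; since $p \geq 5$ is coprime to $4$ and to $n$, this yields $p \mid \nu(Z)$, while $\nu(Z)=0$ on $S$. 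Hence every valuation of $Z$ is divisible by $p$, and as $\cO_{K,S}$ is a PID this means $Z=u_Z Z_0^{\,p}$ with $u_Z$ an $S$-unit. Consequently $p \mid \nu(B_P)$ for all $\nu \notin S$, whence $p \mid \nu(x(P))=-2\,\nu(B_P)$ wherever $\nu(x(P))<0$; that is, $P \in \mathcal{P}(E,K,S,x)_p$, again captured once $\tilde C \geq p$.

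In the remaining case $Z \notin \F$ and $X^3/Z^{4n}\notin K^p$, Proposition \ref{3term} applies verbatim and bounds $n \leq C'/4$. Taking $\tilde C := \max\{p,\,C'/4\}$, which depends only on $E$, $K$ and $S$, then yields the asserted inclusion. I expect the main obstacle to be exactly the disposal of the two exceptional cases above: the characteristic-$p$ valuation bookkeeping (leaning on $p \geq 5$ being coprime to the exponents $3$ and $4$, and on the PID structure to extract the $p$-th power $Z_0$) is the one genuinely new step, the rest being a direct combination of the two preceding propositions.
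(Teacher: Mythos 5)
Your proposal is correct and follows essentially the same route as the paper: combine Propositions \ref{red3} and \ref{3term} with $N=4n$, and dispose of the two exceptional cases ($Z\in\F$, giving an $S$-unit $B_P$, and $X^3/Z^{4n}\in K^p$, forcing $p\mid\nu(B_P)$ for all $\nu\notin S$ via coprimality of $X$ and $Z$) by showing $P$ lies in some $\mathcal{P}(E,K,S,x)_m$ with $m$ bounded. The only differences are cosmetic: the paper skips your preliminary reduction to $p\nmid n$ (it deduces $p\mid\nu(B_P)$ directly from $p\mid 4n\nu(Z)=4\nu(B_P)$ and $\gcd(4,p)=1$) and takes $\tilde C=C'/4+p$ rather than $\max\{p,C'/4\}$.
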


\begin{proof}
The successive enlargement of the original field $K$ and the original set of valuations $S$ only depended on $E,K$ and $S$. We assume we have extended the field and set so that we are in the situation of Proposition \ref{red3}. Let $P \in\mathcal{P}(E,K,S,x)_n \neq \emptyset$. Then in particular, $B_P$ is defined and in the notation of the two previous propositions, $Z^{n}=B_P v$ where $v$ is an $S$-unit. Propositions \ref{red3} and \ref{3term} with $N=4n$ imply that if $n>C'/4$ where $C'$ is the constant implied by Proposition \ref{3term}, then either of the following two cases occurs: 
\begin{enumerate}
\item $Z \in \F$; then $B_P$ is an $S$-unit and hence $P \in \mathcal{Q}(E,K,S,x) \subseteq \mathcal{P}(E,K,S,x)_p$;
\item $X^3/Z^{4n} \in K^p$; since $X$ and $Z$ are coprime $S$-integers, $Z^{4n}$ is a $p$-th power up to an $S$-unit; hence $B_P^4$ is a $p$-th power up to an $S$-unit, and hence (with $p$ odd) $B_P$ is a $p$-th power up to an $S$-unit, so $P \in \mathcal{P}(E,K,S,x)_p$.
\end{enumerate} 
Hence $$\mathcal{P}(E,K,S,x) \subseteq \bigcup_{ n \leq C'/4} \mathcal{P}(E,K,S,x)_n \cup  \mathcal{P}(E,K,S,x)_p,$$ so it suffices to take $\tilde C=C'/4+p$. 
\end{proof}

\begin{remark} \label{upp}
The proof also shows that if $j_E \notin K^p$ and $n$ is not divisible by $p$, then $$\mathcal{P}(E,K,S,f)_n = \mathcal{Q}(E,K,S,f)$$ for $n>C'/4$. 
\end{remark}

\section{Bounding the solutions}

By Corollary \ref{f1}, to prove the main theorem we are now reduced to showing the following: 

\begin{proposition} \label{fixedbound}
If $j_E \notin K^p$, then for fixed $n>2$, the set $\mathcal{P}(E,K,S,x)_n$ is finite. 
\end{proposition}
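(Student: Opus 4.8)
The plan is to run a Siegel-identity argument on the three two-torsion differences of $x(P)$ and to feed the resulting ternary equation into Mason's theorem, in direct analogy with Proposition \ref{3term}. First I would enlarge $K$ to a finite separable extension (permissible by the reduction lemmas) so that the full $2$-torsion of $E$ is rational, say with $x$-coordinates $e_1,e_2,e_3$, and so that $E(K)\subseteq 2E(K)$ as arranged in the proof of Proposition \ref{red3}; simultaneously I enlarge $S$ to contain the support of $\Delta_E$ and to make $\cO_{K,S}$ a PID. For $P\in\mathcal{P}(E,K,S,x)_n$ the identity $\prod_i(x(P)-e_i)=y(P)^2$, together with the pairwise coprimality off $S$ of the numerators $A_P-e_iB_P^2$ (consecutive ones differ by the $S$-unit multiple $(e_j-e_i)B_P^2$), shows that each $x(P)-e_i$ is an $S$-unit times a square; since moreover $P\in 2E(K)$, the $2$-descent map is trivial on $P$ and one gets honestly $x(P)-e_i=r_i^2$ with $r_i\in K$.

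The next step extracts the power. Setting $n':=n/\gcd(n,2)\ge 2$, the hypothesis $n\mid\nu(x(P))$ for $\nu\notin S$ forces $\nu(r_i)=-\nu(B_P)$ to be divisible by $n'$ at every place dividing the denominator. Combining this with the $S$-unit relations $(r_i-r_j)(r_i+r_j)=e_j-e_i$ (an $S$-unit because $\Delta_E$ is supported in $S$) shows that, away from $S$, the divisor of $r_i-r_j$ is supported on denominator places with all multiplicities divisible by $n'$; hence $r_i-r_j=U_{ij}\,\rho_{ij}^{n'}$ for an $S$-unit $U_{ij}$ and $\rho_{ij}\in\cO_{K,S}$ normalised (using the PID) so that $\nu(\rho_{ij})=0$ for $\nu\in S$, with the $\rho_{ij}$ pairwise coprime off $S$. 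The Siegel identity $(r_1-r_2)+(r_2-r_3)+(r_3-r_1)=0$ then reads
\[
U_{12}\rho_{12}^{n'}+U_{23}\rho_{23}^{n'}+U_{31}\rho_{31}^{n'}=0,
\]
a ternary equation in $n'$-th powers to which Theorem \ref{mason} applies directly. Running the height bookkeeping exactly as in Proposition \ref{3term} (with exponent $n'$ in all three slots in place of $3,2,N$) yields, for $n'\ge 3$, a bound on $\Sigma=\sum_{i,j}h(\rho_{ij})$—outright when $n'\ge 4$, and through the bounded-height subcase (Case 2 of Proposition \ref{3term}) when $n'=3$. This bounds $h(r_i-r_j)$, hence $h(r_i)$ via $r_i+r_j=(e_j-e_i)/(r_i-r_j)$, hence $h(x(P))$; since $\F$ is finite, elements of $K$ of bounded height form a finite set, so only finitely many values $x(P)$ and therefore finitely many $P$ can occur.

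The main obstacle is the exceptional alternative in Mason's theorem, namely that the relevant ratio lies in $K^p$. This alternative is \emph{forced} whenever $p\mid n'$ (for instance $n=p$, where each $\rho^{n'}$ is automatically a $p$-th power), and can occur sporadically otherwise, so it cannot be dismissed: the crux of the proof is to show it can only happen when $j_E\in K^p$, contrary to the standing hypothesis. Concretely I would trace a $p$-th-power relation among the $\rho_{ij}$ back through $r_i-r_j$ and $x(P)-e_i$ to a $p$-th-power relation among the Weierstrass coefficients $a,b$, i.e.\ to a Frobenius untwisting of $E$, forcing $j_E\in K^p$. A second, more pedestrian, difficulty is the degenerate low-exponent case $n'=2$ (which arises precisely for $n=4$): there the ternary equation is a conic and Mason gives no height bound, so one must additionally exploit that the sums $r_i+r_j$ are \emph{also} $n'$-th powers up to $S$-units—over-determining the system satisfied by the $r_i$—to recover finiteness.
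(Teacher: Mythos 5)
Your overall strategy coincides with the paper's: write $x(P)-e_i=r_i^2$ (equivalently $A_P-\alpha_iB_P^2=z_i^2$ with $r_i=z_i/B_P$), extract the exponent from the coprime factorisation of $(\alpha_j-\alpha_i)B_P^2$, feed a Siegel identity into Mason's theorem, and show that the exceptional branch of Mason forces $j_E\in K^p$. But the two points you flag as ``difficulties'' are genuine gaps as written, not loose ends. First, your normalisation loses a factor of $2$ in the exponent: since $(z_i-z_j)(z_i+z_j)=(\alpha_j-\alpha_i)B_P^2$ with the two factors coprime off $S$ and $B_P$ an $\ell$-th power up to $S$-units (where $\ell=n/\gcd(n,2)$), each of $z_i\pm z_j$ is a $2\ell$-th power up to $S$-units, and $2\ell\geq 4$ for every $n>2$, so Mason always applies. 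Dividing by $B_P$ to form $r_i\pm r_j$ destroys this: at a place with $\nu(B_P)=m$ one of $\nu(z_i\pm z_j)$ is $2m$ and the other $0$, so $\nu(r_i-r_j)=\pm m$ is divisible only by $\ell$. That is exactly why you are left with a conic when $n=4$; the fix you gesture at (using that the sums are also powers) is not carried out, and the clean repair is simply to keep the exponent $2\ell$ by working with $z_i\pm z_j$, as the paper does. (A related technical slip: your $\rho_{ij}$ cannot be taken in $\cO_{K,S}$ with the stated coprimality, since $r_i-r_j$ has genuine poles at some denominator places and zeros at others.)

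Second, and more seriously, the exceptional case of Mason is the whole content of the proposition in characteristic $p$, and your treatment of it is a statement of intent rather than an argument. From the single additive identity $(r_1-r_2)+(r_2-r_3)+(r_3-r_1)=0$, the exceptional branch only tells you that a ratio such as $(r_1-r_2)/(r_2-r_3)$ lies in $K^p$, and it is not at all clear how to pass from that one piece of information to a $p$-th power relation among the Weierstrass coefficients. The paper instead applies Mason to the ratios $\frac{z_i\pm z_j}{z_i\Delta z_k}$ arising from \emph{both} Siegel identities and runs a three-way case analysis: if for some triple of indices both signs give $p$-th powers, then the $S_3$-invariance of this condition together with the factorisation $\lambda=\frac{\alpha_1-\alpha_2}{\alpha_1-\alpha_3}=\frac{z_1-z_2}{z_1+z_3}\cdot\left(\frac{z_1-z_3}{z_1+z_2}\right)^{-1}$ shows $\lambda\in K^p$ and hence $j_E=256(\lambda^2-\lambda+1)^3/(\lambda^2(\lambda-1)^2)\in K^p$; if only one sign gives a $p$-th power, an algebraic identity moves the problem back to a configuration where Mason's bound applies. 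Without some version of this bookkeeping over both the sums and the differences, the assertion that the exceptional case forces $j_E\in K^p$ is unsupported, and the proof does not close.
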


\begin{proof} The start of the proof is a function field version of the argument in \cite{Jthesis}, Theorem 5.2.1, which we then combine with the abc-hypothesis in function fields. This means we have to deal with the exceptional case  where a term is a $p$-th power, but we show that this implies that $j_E \in K^p$. 

Suppose that $P \in \mathcal{P}(E,K,S,x)_n$ for $n>2$. Without loss of generality, we assume $E$ is in short Weierstrass form with coefficients from $\cO_{K,S}$, and $K$ and $S$ have been extended so that $O_{K,S}$ is a PID, the $2$-torsion of $E$ is $K$-rational, and $\Delta_E$ is an $S$-unit. Let $\alpha_1, \alpha_2, \alpha_3$ denote the $x$-coordinates of the points in $E[2]$. Extend $S$ further so that the differences $\alpha_i-\alpha_j$ are $S$-units for $i \neq j$. The necessary field extension is separable, since $p \geq 5$. 

Let $P = (A_P/B_P^2,C_P/B_P^3)  \in E(K)$ where $A_PC_P$ and $B_P$ are coprime $S$-integers. Plugging the coordinates of $P$ into the Weierstrass equation gives 
$$ C_P^2 = \prod_{i=1}^3 (A_P-\alpha_i B_P). $$
The factors $A_P-\alpha_i B_P^2$ are coprime $S$-integers. Indeed, if $\nu \notin S$ has $\nu(A_P-\alpha_i B_P^2)>0$ and $\nu(A_P-\alpha_j B_P^2)>0$, then $\nu((\alpha_i-\alpha_j)B_P^2)>0$, so $\nu(B_P)>0$, and hence from $\nu(A_P-\alpha_i B_P^2)>0$ it follows that also $\nu(A_P)>0$, a contradiction. Hence
\begin{equation} \label{ddd} 
A_P-\alpha_i B_P^2=z_i^2
\end{equation} 
for some $z_i \in K$, up to $S$-units. By extending $K$ such that all $S$-units from $K$ become squares (which can be done by a finite extension by Dirichlet's unit theorem)---and keeping all previous conditions satisfied---, we absorb the $S$-unit into $z_i$. Since the necessary field extension is of degree a power of $2$, it is separable for $p \geq 5$. Taking the difference of any two of the equations (\ref{ddd}) yields
\begin{equation} \label{bb}
(\alpha_j-\alpha_i)B_P^2=(z_i+z_j)(z_i-z_j).
\end{equation}
Now $z_i+z_j$ and $z_i-z_j$ are coprime, since if $\nu(z_i+z_j)>0$ and $\nu(z_i-z_j)>0$ for $\nu \notin S$, then $\nu(z_i)>0$. But also $\nu(B_P)>0$ from (\ref{bb}), and hence $\nu(A_P)>0$ from (\ref{ddd}), a contradiction since $A_P$ and $B_P$ are coprime. 

Write $B_P=uB^\ell$ with an $S$-unit $u$ for some $B \in \cO_{K,S}$ and $n \in \Z$ with $\ell>1$ and $n=2\ell$. Then $z_i+z_j$ and $z_i-z_j$ are $n$-th powers up to $S$-units. For convenient notational purposes, let $\Delta$ denote a fixed choice of a plus or minus symbol, and $\nabla$ the opposite sign. We will use without further mentioning that $-1 \in K^p$. We distinguish the following cases:
\begin{enumerate}
\item \emph{There exists a set of distinct indices $i,j,k$ for which $\frac{z_i \pm z_j}{z_i \Delta z_k} \notin K^p$ for both signs $\pm$.}\\
We have the following \emph{Siegel's identities}:
\[
\frac{z_i \pm z_j}{z_i-z_k} \mp \frac{z_j \pm z_k}{z_i-z_k}=1=\frac{z_i \pm z_j}{z_i+z_k} \mp \frac{z_j \mp z_k}{z_i+z_k}.
\]
In our situation, they become equations of the form 
\begin{equation} \label{xyz}
aX^{2\ell}+bY^{2\ell}=1,
\end{equation} 
 $a,b \in \cO_{K,S}^*$ are $S$-units. Using the function field version of Dirichlet's unit theorem, there is a finite set $R$ of representatives for such units up to $2\ell$-th powers.  The above reasoning implies that $\frac{z_i \pm z_j}{z_i\Delta z_k}$ takes on values inside
 $$\mathcal{S}:= \{ a_0 X_0^{2\ell} \colon  X_0 \in K, a_0 \in R \mbox{ and } \exists Y_0 \in K, b_0 \in R \colon a_0 X_0^{2\ell} + b_0 Y_0^{2\ell} = 1 \}. $$
Mason's theorem implies that for $n>2$ (i.e., $\ell>1$), the solution set to such any of the finitely many ternary equations that occur in the definition of $\mathcal S$ is finite, since $\frac{z_i \pm z_j}{z_i \Delta z_k} = a X_0^{2\ell} \notin K^p$ by assumption.  

This implies that the set of values taken by \begin{equation} \label{z} Z_{\Delta} =\frac{1}{\alpha_j-\alpha_i} \cdot  \frac{z_i - z_j}{z_i \Delta z_k} \cdot  \frac{z_i + z_j}{z_i \Delta z_k} \end{equation} is finite. To finish the proof that $P$ takes on only finitely many values in this case, we state the following identity, which can be verified by direct computation, or follows from combining the last four indented formulas in the proof of 5.2.1 in \cite{Jthesis}: 
\begin{equation} \label{x} 4x(P) = 2(\alpha_i+\alpha_k) + Z_{\Delta}^{-1}+(\alpha_i-\alpha_k)^2 Z_{\Delta}, \end{equation}
and observe that to every value of $x(P)$ correspond at most two values of $P$. 
\item \emph{There exists a set of distinct indices $i,j,k$ for which $x_{\pm}:=\frac{z_i \pm z_j}{z_i \Delta z_k} \in K^p$ for both signs $\pm$.}\\
We claim that if this statement holds for one set of indices (for fixed $\Delta$), it holds for all sets of indices (for the same fixed $\Delta$). It suffices to prove it for the permuted indices $(j,i,k)$ and $(k,j,i)$, since these permutations generate $S_3$. The second permutation is implemented by replacing $x_{\pm}$ by $\pm(1-x_{\pm})$, which are $p$-th powers if and only if $x_{\pm}$ are so. The first is given by replacing $x_{\pm}$ by $-x_{\pm}/(1-x_{\pm})$. This proves the claim. 

We then conclude from the equalities
$$ \lambda := \frac{\alpha_1 - \alpha_2}{\alpha_1 - \alpha_3} = \frac{z_1-z_2}{z_1+z_3} \cdot \left(\frac{z_1-z_3}{z_1+z_2}\right)^{-1}  = \left(\frac{z_1+z_3}{z_1-z_2} \right)^{-1}\cdot \frac{z_1+z_2}{z_1-z_3} $$
(the first if $\Delta=+$ and the second if $\Delta=-$) 
that $\lambda$ is a $p$-th power. But now we have
\begin{equation*} j_E = 256 \frac{(\lambda^2-\lambda+1)^3}{\lambda^2(\lambda-1)^2}, \end{equation*}
(cf.\ \cite{SilvermanAEC}, III.1.7), and we conclude that $j_E \in K^p$, which we have assumed is not the case.
\item \emph{For all set of distinct indices $i,j,k$, we have $\frac{z_i \mp z_j}{z_i \Delta z_k} \in K^p$ and $\frac{z_i \pm z_j}{z_i \Delta z_k} \notin K^p$, for some choice of signs $\mp$ and $\pm$ (depending on the indices).}\\
We use the identity
$$ \frac{z_i \pm z_j}{z_i \Delta z_k} = \frac{   1 -  \frac{z_i \mp z_j}{z_i \Delta z_k}}{ 1 -  \frac{z_i \nabla z_k}{z_i \pm z_j}} $$
to see that $\frac{z_i \nabla z_k}{z_i \pm z_j} \notin K^p$. But together with the assumption that $\frac{z_i \pm z_j}{z_i \Delta z_k} \notin K^p$ (so also its inverse), this implies that both $\frac{z_i \nabla z_k}{z_i \pm z_j}$ and $\frac{z_i \Delta z_k}{z_i \pm z_j}$ are not $p$-th powers, and the first case applies. 
\end{enumerate}
This covers all cases and finishes the proof of the theorem. 
\end{proof} 

\begin{remark}
In particular, $\mathcal{P}(E,K,S,f)_p$ is finite, and hence there exists an integer $R$ such that $\mathcal{P}(E,K,S,f)_{p^k}=\mathcal{Q}(E,K,S,f)$ for all $k \geq R$. Then the bound in Remark \ref{upp} can be altered to: if $E$ is an elliptic curve with $j_E \notin K^p$, then $\mathcal{P}(E,K,S,f)_n-\mathcal{Q}(E,K,S,f)=\emptyset$ for $n \geq \max \{C'/4,p^R\}$. Since one can in principle use the above method to bound the height of elements in $\mathcal{P}(E,K,S,f)_p$, they can be listed and $R$ can be found. 
\end{remark}

\begin{remark}
It might seem that the above proof simultaneously bounds $\ell$ and the height of a solution, so that there is no need for proving Corollary \ref{f1} first. However, in general the maximal height of a set of representatives of $S$-units up to $2\ell$-th powers depends on $\ell$, making this reasoning impossible. In some special cases, e.g.\ when the field extension that is used has a \emph{finite} unit group, one can restrict the ``coefficients'' $a$ and $b$ to a finite set independent of $\ell$, and then such a simultaneous bound \emph{is} possible, see, e.g.\ Example \ref{eee} below. 
\end{remark}

\section{Explicit bounds} 

We now show some examples of explicit bounds. 

\begin{se} For this, we first list some crude estimates of heights in a rational function field $\F(u)$ (we write the variable as $u$ to avoid confusion when taking field extensions). The first two estimates are (see, e.g., \cite{BG} 1.5.14-15, but do the non-archimedean case): 
\begin{equation} \label{ha} \max\{h(xy),h(x+y)\} \leq h(x)+h(y) \mbox{ if } x,y \in \F(u); \end{equation} which implies the following: 
\begin{lemma} \label{haha}
If $X,Y \in \F_q(u)$ and $a,b \in \F_q[u]$, then $$h(aX^3+bY^2) \leq h(a)+h(b)+3h(X)+2h(Y).$$ In particular, if $h(X)+h(Y) \leq 11 C$ and $\max \{ h(a), h(b) \} \leq \kappa$, then 
$$ h(aX^3+bY^2) \leq 2\kappa+33C. \qed $$
\end{lemma}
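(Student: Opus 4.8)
The plan is to reduce everything to the single subadditivity estimate (\ref{ha}), applied repeatedly. First I would establish the general inequality by decomposing $aX^3+bY^2$ as a sum of two products. Applying the additive bound in (\ref{ha}) gives $h(aX^3+bY^2)\leq h(aX^3)+h(bY^2)$. Then I would treat each summand as an iterated product: the multiplicative bound in (\ref{ha}) yields $h(aX^3)\leq h(a)+h(X^3)$ and $h(bY^2)\leq h(b)+h(Y^2)$.

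The only remaining point is to bound the heights of the powers. Writing $X^3=X\cdot X\cdot X$ and applying the product bound twice gives $h(X^3)\leq 3h(X)$, and likewise $h(Y^2)\leq 2h(Y)$. Assembling these estimates produces the first displayed inequality $h(aX^3+bY^2)\leq h(a)+h(b)+3h(X)+2h(Y)$. For the ``in particular'' statement I would simply substitute the hypotheses: the assumption $\max\{h(a),h(b)\}\leq\kappa$ gives $h(a)+h(b)\leq 2\kappa$, and since heights are nonnegative we have $3h(X)+2h(Y)\leq 3(h(X)+h(Y))\leq 33C$ using $h(X)+h(Y)\leq 11C$. Adding the two contributions yields $h(aX^3+bY^2)\leq 2\kappa+33C$.

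There is no genuine obstacle here; the lemma is a routine consequence of the subadditivity of the non-archimedean height on $\F_q(u)$. The only point requiring a little care is that (\ref{ha}) is stated for two arguments, so the bounds $h(X^3)\leq 3h(X)$ and $h(Y^2)\leq 2h(Y)$, as well as the splitting of $aX^3+bY^2$, must be obtained by iterating the two-variable estimate rather than by invoking a multi-term version directly. (One could instead use the exact identity $h(x^m)=m\,h(x)$, but the inequalities suffice and keep the argument self-contained within (\ref{ha}).)
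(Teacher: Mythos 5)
Your proof is correct and is exactly the argument the paper intends: the lemma is stated as an immediate consequence of the subadditivity estimate (\ref{ha}), applied once to the sum and iteratively to the products and powers, and your handling of the ``in particular'' clause via nonnegativity of heights is the right one. Nothing further is needed.
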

For future use, we also list the following fact: \begin{equation} \label{h+}  h(\alpha x) \geq h(x)-h(\alpha) \mbox{ for all } \alpha \in \F[u] \mbox{ and } x \in \F(u). \end{equation} Indeed, writing $x=a/b$ with $\alpha=\alpha_0 \alpha_1$ and $b=\alpha_0 b_1$ with $b_1$ coprime to $\alpha$, we find $h(ax) = \max \{ \deg \alpha_1 +\deg a , \deg b_1 \} \geq \max \{\deg a , \deg b_1 + \deg \alpha_0 \} - \deg \alpha_0 \geq h(x)-h(\alpha).$ Using $h(x)=h(1/x)$, this implies that 
 \begin{equation} \label{h++}  h(\alpha x) \geq h(x)-2 h(\alpha) \mbox{ for all } \alpha \in \F(u) \mbox{ and } x \in \F(u). \end{equation}
\end{se} 

\begin{example} \label{expl} In this example, we make the strategy from the proof of Proposition \ref{red3} explicit. 
All calculations in this example were verified using MAGMA \cite{Magma}. Consider the curve \begin{equation} \label{exE} E \colon y^2 = x^3-t(t-2)x^2+2t^2(t+1)x \end{equation}
over $K=\F_5(t)$ of discriminant $\Delta_E=4t^6(t+1)^2(t-1)^2$ and $j$-invariant $j_E=-(t^2-2)^3/(t^2-1)^2$. Let $S=\{1/t\}$. The group $E(K)$ is the direct product of the full 2-torsion and a free group of rank one generated by $P=(t,t^2)$.  

We want to compute an explicit upper bound on the constant $C'$ from Remark \ref{upp}; in terms of the elliptic divisibility sequence associate to $P$ with values in $\F_5[t]$, this means that the maximal pure $\ell$-th power, non-constant and  with $\ell$ coprime-to-$5$, that occurs in it has $\ell \leq n/2 \leq C'/8$. The sequence starts
$$ 1,1,t^2-1, t^2+1, (t^3+t^2-2t-1)(t^3-t^2-2t+1), \dots . $$

For this, we look at the proof of Proposition \ref{red3}. At the start of the proof, the field $K$ is extended to contain the $x$-coordinates of a point $Q$ such that $P=2Q$. The point $Q$ exists over the field $K'=\F_5(T)$ where $T^2=t$ (actually, $x(Q)=T^2(T-2)$). Then we enlarge $S$ to contain the divisors of $\Delta_E$, so $S'=\{1/T,T,T\pm 1, T\pm 2\}$. The $S'$-class number is 1, so $\mathcal{O}_{K',S'}$ is already a PID. At the end of the proof, it seems computationally advantageous not to extend $K'$ further to contain $6$-th roots of $S$-units, but rather, to choose a set of representatives $R=\{2,T\pm 1, T\pm 2\}$ for $\mathcal{O}^*_{K',S'}/(\mathcal{O}^{\ast}_{K',S'})^6$ and rephrase the result from Corollary \ref{f1} explicitly (with $g_{K'}=0$ and $|S'|=6$) as \begin{equation} \label{newest} n \leq \min \{ 2, \frac{1}{4} \max_{a,b \in R} \{ h(aX^3+bY^2) \colon h(X)+h(Y) \leq 11 C  \} \},\end{equation} 
with $C = -2 + 6 = 4$. Observe that $\max \{ h(a) \colon a \in R \} = 1 =:\kappa$, and use Lemma \ref{haha} to find \begin{align*} h(aX^3+bY^2) \leq 2+33 \cdot 4 \end{align*} so that finally $n \leq 33$ and $\ell \leq 16$. 
\end{example}

\begin{examplepf} \label{eee} In this example, we show how, in some cases, the proof of Proposition \ref{fixedbound} can be changed so it implies a simultaneous bound on the exponent and the height of a perfect power, leading to a proof of Proposition \ref{introprop} from the introduction. 

Assume that $E$ is an elliptic curve over a rational function field $K=\F_q(t)$ with with $j_E \notin K^p$ and coefficients from $\F_q[t]$ (the ring of $S$-integers for $S=\{1/t\}$) such that all 2-torsion points on $E$ are $K$-rational, and assume that $P=2Q \in 2E(K)$ with associated elliptic divisibility sequence $\{B_n\}$. Set $S=\{1/t\}$, so $\mathcal{O}_{K,S} = \F_q[t]$. Suppose that $B_n = C^{\ell}$ for some $C \in \F_q[t]$. Since $P=2Q$, in the proof of Proposition \ref{fixedbound}, the expressions $A_P-\alpha_i B_P=z_i^2$ are actual squares in $\F_q[t]$, so that $(z_i-z_j)/(z_i \Delta z_k) = aX^{2 \ell}$ satisfies $a X^{2 \ell} + b Y^{2 \ell} = 1$ for some $a, b \in \F_q(t)$ whose numerator and denominator divide some of the $\alpha_i - \alpha_j$. In particular, they divide $\Delta_E$, so $\max \{h(a), h(b)\} \leq \deg \Delta_E$. If $x \in K$, let $n_0(x)$ denote the number of valuations $\nu$ for which $\nu(x) \neq 0$. The abc-hypothesis (Mason's theorem) implies a bound on the height of a possible solution $X$, as follows: 
\begin{align} \label{vbn} \max\{ h(aX^{2\ell}), h(bY^{2\ell})\} & \leq -2 + \# \{ \nu \colon \nu(aX^{2\ell}) \neq 0 \mbox{ or } \nu(bY^{2\ell}) \neq 0\} \\
&\leq -2 +  n_0(a) + n_0(b) + h(X) + 2 h(Y);   \end{align}
where we may write $h(X)+2h(Y)$ instead of $2h(X)+2h(Y)$ since the equation satisfied by $X$ and $Y$ implies that if $\nu(a)=\nu(b)=0$ and $\nu(X)<0$, then also $\nu(Y)<0$. 
Using (\ref{h++}), we find 
$$ \max\{ -2h(a) + 2 \ell h(X), -2h(b)+2 \ell h(Y) \} \leq -2 + n_0(a) + n_0(b) + h(X) + 2 h(Y), $$ 
which add up to give
\begin{align} \label{ytr} (\ell-1)h(X) & \leq (\ell-1) h(X) + (\ell-2) h(Y) \\ & \leq -2  + n_0(a) + n_0(b) + h(a)+h(b) \\ &\leq 2 (\deg \Delta_E + n_0(\Delta_E)-1) \\ & \leq 4 \deg \Delta_E,\end{align}
since $\max \{n_0(a), n_0(b) \} \leq n_0(\Delta_E) \leq \deg \Delta_E + 1$ (counting the valuation $\deg_t$). 
This implies in particular that $$\ell \leq 2 (\deg \Delta_E + n_0(\Delta_E))-1 \leq 4 \deg \Delta_E+1.$$ For symmetry reasons, the estimate (\ref{ytr}) also holds with $X$ replaced by $Y$. From (\ref{vbn}), we then find (with $\ell \geq 2$) that 
\begin{align*} h(aX^{2l}) &\leq  -2 + 2n_0(\Delta_E) + 3 h(X) \\ &\leq 6 \deg \Delta_E + 8 (n_0(\Delta_E)-1).\end{align*} With our previous estimates for height of sums and products, we deduce from this with (\ref{z}) and (\ref{x}) that 
$$ h(Z_\Delta) \leq  13 \deg \Delta_E + 16 (n_0(\Delta_E)-1) $$
and finally
$$ h(x(P)) \leq 29 \deg \Delta_E + 32 (n_0(\Delta_E)-1) \leq 61 \deg \Delta_E. $$ 
An estimate for the difference between the height and the canonical height can be deduced from the local (non-archimedean) counterparts (as in section 4 of \cite{SilvermanH}), and gives 
$$ -\frac{1}{24} h(j_E) \leq \hat h(P) - \frac{1}{2} h(x(P)) \leq 0, $$ 
so  we find \begin{align*} n  = \frac{\hat h (nP)}{\hat h(P)} & \leq \frac{h(x(nP))}{2(\frac{h(P)}{2}-\frac{h(j_E)}{24})} \\ &\leq \frac{29 \deg \Delta_E + 32 (n_0(\Delta_E)-1)}{h(P)-\frac{h(j_E)}{12}} \\ & \leq \frac{732 \deg \Delta_E}{12h(P)-h(j_E)}.\end{align*} 
Translated to the corresponding elliptic divisibility sequence, this proves Proposition \ref{introprop}. \qed
\end{examplepf}

\begin{example} \label{explH}
In Example \ref{expl}, $P=2Q$ over $K'=\F_5(T)$ with $T=t^2$. Thus, the method from Example \ref{eee} applies, but it turns out to produce a worse bound. However, in this concrete case one can improve the estimate as follows: instead of bounding $h(a)$ by $\deg \Delta_E$, we observe that the set of differences $\alpha_i-\alpha_j$ belongs to $\{ 2T^2, T^2(T^2+1), T^2(T^2-1)\}$, whose elements have maximal degree $4$ and maximal number of distinct prime divisors $3$, so $\max \{ \deg(a),\deg(b),n_0(a),n_0(b) \} \leq 4$. Thus, the estimates from the previous example can be improved to give
$ (\ell-1) h(X) \leq -2  + n_0(a) + n_0(b) + h(a)+h(b) \leq 14$, so $\ell \leq 15$. This result has no assumption of $\ell$ being coprime to $5$. We also find via this method that $n \leq 212$ (note that the computations are using the degree in $T$ (height in the field $K'$), but we are interested in the degree in $t$ (height in the original field $K$), so have have halved the final result). Finally, we computed in SAGE \cite{sage} that $B_3, B_4$ and $B_5$ have only simple factors, and hence any $B_n$ with $n$ divisible by $3, 4$ or $5$ has a simple factor, by the remark at the end of \ref{formal}. On the other hand, we also computed in SAGE that the only non-squarefree values of $B_n$ for $6 \leq n \leq 212$ have $n$ divisible by $3, 4$ or $5$.  We conclude that the only perfect power denominators occur for $n=1$ and $n=2$, which corresponds to $B_1=B_2=1$. 
Below is the SAGE command that produces these results:
\begin{quote} {\footnotesize {\tt
\begin{verbatim}
F = FractionField(PolynomialRing(GF(5),'t'))
t = F.gen()
E = EllipticCurve([0,-t*(t-2),0,2*t^2*(t+1),0])
P = E(t,t^2)
n = 0
p = 0
for counter in [1..212]:
  n=n+1
  p=denominator((n*P)[0])
  if ((p.degree()==0) or (not(sqrt(p).is_squarefree()) 
  and not(3.divides(n) or 4.divides(n) or 5.divides(n)))): print n
print 'Bye'
\end{verbatim} }}
\end{quote}
\end{example}

\section{The case of number fields}

\begin{se} We briefly indicate how the (non-uniform) abc-hypothesis for number fields implies the number field analogue of our main theorem. 
The abc-hypothesis for a number field $K$ was formulated by Vojta (\cite{Vojta}, p.\ 84), and it is easily seen to be equivalent to the following. For any $x \in K^*$ define
$$ h(x):= - \sum_{v(x)<0} v\left(x\right) \log N(\p_v), $$ where $N$ is the absolute norm and the non-archimedean valuation $v$ corresponds to the prime $\p_v$. 

\begin{conjecture}[abc-conjecture for number fields]
For any $\varepsilon>0$, there exists a positive constant $C_\varepsilon$, such that for all $\gamma_1, \gamma_2, \gamma_3 \in K^*$ satisfying $\gamma_1 + \gamma_2 + \gamma_3=0$, with $v(\gamma_1)=v(\gamma_2)=v(\gamma_3)$ for all non-archimedean valuations outside a finite set $T$ of such valuations, we have
$$  h\left(\frac{\gamma_1}{\gamma_3}\right) < (1+\varepsilon)\left(\sum_{v \in T} \log N(\p_v) \right) + C_\varepsilon.$$
\end{conjecture}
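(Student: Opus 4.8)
The assertion is stated as a \emph{conjecture}, so there is nothing to prove outright; what the preceding sentence promises, and hence what I would establish, is the claimed \emph{equivalence} with Vojta's abc-hypothesis for $K$ (\cite{Vojta}, p.~84). Vojta's formulation is geometric: for $\mathbf{P}^1$ with the divisor $D=[0]+[1]+[\infty]$ and a finite set $S\supseteq S_\infty$, for every $\varepsilon>0$ there is a constant such that $h_{\mathrm{Weil}}(P)\le (1+\varepsilon)\,N^{(1)}_S(D,P)+C_\varepsilon$ for all $P\in\mathbf{P}^1(K)$ off $\mathrm{supp}(D)$, where $N^{(1)}_S$ is the truncated (multiplicity one) counting function weighted by $\log N(\p_v)$. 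The plan is to build the dictionary between this and the additive three-term statement as worded.

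First I would pass from the triple to one coordinate. Setting $\lambda:=-\gamma_1/\gamma_3$, the relation $\gamma_1+\gamma_2+\gamma_3=0$ gives $\gamma_2/\gamma_3=\lambda-1$, so $\lambda\in K\setminus\{0,1\}$ is a point of $\mathbf{P}^1(K)$ off $\mathrm{supp}(D)$, and $h(\gamma_1/\gamma_3)=h(\lambda)$ since $-1$ is a unit. Scaling $(\gamma_1,\gamma_2,\gamma_3)$ by a common element of $K^*$ changes neither $\lambda$ nor $T$ (outside $S_\infty$ and the divisors of the scaling factor), so I may take the $\gamma_i$ to be coprime $S$-integers. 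The hypothesis that $v(\gamma_1)=v(\gamma_2)=v(\gamma_3)$ for $v\notin T$ then says precisely that $T$, up to $S_\infty$ and a fixed finite set, is the set of places where $\lambda$ meets $0,1$ or $\infty$, i.e.\ the support of the pullback of $D$; hence $\sum_{v\in T}\log N(\p_v)$ agrees with $N^{(1)}_S(D,\lambda)$ up to a bounded error. This gives the easy implication immediately: the denominator-only height $h(\lambda)$ of the text is dominated by $h_{\mathrm{Weil}}(\lambda)$, so Vojta's inequality at once yields the statement as worded.

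For the converse I would use that the conjecture is quantified over \emph{all} triples, hence is invariant under the $S_3$-action permuting the $\gamma_i$; this carries the single ratio $\gamma_1/\gamma_3$ through the whole orbit $\{\gamma_i/\gamma_j\}_{i\ne j}$ while fixing the conductor $\sum_{v\in T}\log N(\p_v)$. Applying the conjecture to each permutation bounds every $h(\gamma_i/\gamma_j)$ by the same right-hand side, and the product formula, applied to the coprime integral representatives, then expresses the archimedean part of $h_{\mathrm{Weil}}(\lambda)$ in terms of these denominator norms. Taking the maximum over the six permutations thereby reconstructs Vojta's full-height left-hand side, the bounded and archimedean discrepancies being absorbed into the factor $1+\varepsilon$ and into $C_\varepsilon$.

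The main obstacle is precisely this last reconciliation, and it is bookkeeping rather than idea: the text's $h$ ignores the archimedean places whereas Vojta's left-hand side is the full Weil height, so one must check, via the coprimality normalisation and the product formula, that controlling all the ratios $\gamma_i/\gamma_j$ simultaneously pins down the archimedean contribution up to the available slack, and separately that the unweighted-multiplicity sum $\sum_{v\in T}\log N(\p_v)$ matches Vojta's truncated function $N^{(1)}$ (each bad place counted once) rather than the full counting function. The forward implication alone---the only one needed for the number-field application later in the paper---sidesteps all of this, since $h(\lambda)\le h_{\mathrm{Weil}}(\lambda)$ trivially.
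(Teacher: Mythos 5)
The statement you are reviewing is a \emph{conjecture}, and the paper accordingly contains no proof of it: the only surrounding claim is the unproven remark that it ``is easily seen to be equivalent'' to Vojta's formulation. Your framing---that the only substantive content is this equivalence, and that the direction Vojta $\Rightarrow$ paper is immediate because the paper's $h$ (a sum over non-archimedean places only) is dominated by the full Weil height, with the two conductors matching up to monotonicity in $T$---is correct, and, as you observe, this trivial direction is all the paper ever uses: Theorem \ref{mainNF} assumes the conjecture exactly as stated, so anyone assuming Vojta's form may apply it.

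Your converse direction, however, has a genuine gap, and it is not ``bookkeeping.'' The product formula determines only the archimedean \emph{sum} $\sum_{v\mid\infty}\log\|\lambda\|_v = h(1/\lambda)-h(\lambda)$, not the archimedean \emph{positive part} $\sum_{v\mid\infty}\max\{0,\log\|\lambda\|_v\}$ that enters the full Weil height; these agree only when $K$ has a single archimedean place, which is why your reconstruction works over $\Q$ (there, $h_{\mathrm{Weil}}(\lambda)=\max\{h(\lambda),h(1/\lambda)\}$) but fails in general. The discrepancy is exactly the ``unit part,'' which is invisible to every piece of finite data you use. Concretely, take $K=\Q(\sqrt{5})$ and the triple $(\gamma_1,\gamma_2,\gamma_3)=(u,v,-1)$ with $u=(1+\sqrt{5})/2$, $v=(1-\sqrt{5})/2$, so that $\gamma_1+\gamma_2+\gamma_3=0$ and $u,v\in\mathcal{O}_K^*$: every ratio $\gamma_i/\gamma_j$ is a unit, so all six of your denominator heights vanish and one may take $T=\emptyset$, yet Vojta's left-hand side is $h_{\mathrm{Weil}}(u)>0$. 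A single such triple is absorbed into $C_\varepsilon$, but the same computation shows that, restricted to triples of units, the paper's conjecture is vacuous while Vojta's asserts an effective height bound for solutions of the unit equation $x+y=1$; so your converse would have to reprove at least the $S$-unit equation theorem (Diophantine approximation \`a la Baker--Evertse), which no amount of product-formula manipulation of the six ratios can do. Moreover, even granting Baker-type bounds to control the unit parts, interpolating to triples with large conductor multiplies the bound by a constant $c(K)\gg 1$, so the $(1+\varepsilon)$ shape of Vojta's inequality is not recovered. The honest conclusion is that only the implication you call easy is actually available; that is also the only sound reading of the paper's equivalence remark, and it is the only direction the paper needs.
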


\begin{theorem} \label{mainNF} Assume that the abc-hypothesis holds for any number field. 
Let $K$ be a number field and $S$ a finite set of places of $K$, including all archimedean ones. Suppose that $E$ is an elliptic curve over $K$. Let $f$ denote a function in $K(E)$ with a pole of order $-\ord_O(f)>0$ at the zero point $O=O_E$ of the group $E$.
Then the set 
$$\bigcup_{n \nmid \ord_O(f)} \{ P \in E(K) : n \mid \nu(f(P)), \textrm{ for all } \nu \notin S \textrm{ with } \nu(f(P)) <0 \} $$
is finite. \qed
\end{theorem}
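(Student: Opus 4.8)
The plan is to transpose the function-field proof essentially verbatim, replacing Mason's Theorem \ref{mason} by the number-field abc-conjecture at the two places it was used (Propositions \ref{3term} and \ref{fixedbound}) and deleting every step whose sole purpose was to control inseparability. Since $\mathrm{char}\,K=0$ there is no Frobenius morphism, so the twist reduction of Proposition \ref{doug} is unnecessary and the parameter $s$ never appears; this is precisely why the statement carries no factor $p^s$, why the union ranges over \emph{all} $n\nmid\ord_O(f)$, and why no hypothesis of non-isotriviality (the number-field analogue of $j_E\notin\F$) is needed.

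First I would run the reductions of Section~2. We may enlarge $K$ to a finite extension and $S$ to a finite superset of places (keeping all archimedean places in $S$ and $\mathcal{O}_{K,S}$ a PID), since $\mathcal{P}(E,K,S,f)_n\subseteq\mathcal{P}(E,K',S',f)_n$; the passage from a general $f$ to the coordinate $x$ of a short Weierstrass model goes through unchanged via (\ref{rep1})--(\ref{rep2}), reducing us to the finiteness of $\mathcal{P}(E,K,S,x)$. I would then apply the Klein-form argument of Proposition \ref{red3}: after arranging $E(K)\subseteq 2E(K')$ and absorbing the divisors of $\Delta_E$ into $S$, the Eisenstein syzygy (\ref{sy}) turns a point $P\in\mathcal{P}(E,K,S,x)_n$ into a solution of $aX^3+bY^2=Z^{4n}$ in pairwise-coprime $S$-integers with $a,b$ $S$-units, and Dirichlet's $S$-unit theorem (valid over number fields) reduces $a,b$ modulo sixth powers to finitely many cases, yielding $X^3+Y^2=Z^{4n}$ with $X,Y,Z$ pairwise coprime.

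Next I would bound $n$, paralleling Proposition \ref{3term}. Applying the number-field abc-conjecture to $\gamma_1+\gamma_2+\gamma_3=0$ with $(\gamma_1,\gamma_2,\gamma_3)=(X^3,Y^2,-Z^{4n})$ and $T=S\cup\{v:v(XYZ)\neq 0\}$, in its three cyclic forms, produces upper bounds for $3h_0(X)$, $2h_0(Y)$ and $4n\,h_0(Z)$, where $h_0(x)=\sum_{v(x)>0}v(x)\log N(\mathfrak{p}_v)$ is the numerator height; the conductor $\sum_{v\in T}\log N(\mathfrak{p}_v)$ on the right is bounded by $\Sigma+C$ with $\Sigma=h_0(X)+h_0(Y)+h_0(Z)$ and $C=C(K,S,\varepsilon)$. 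Summing the three inequalities gives $\Sigma\le(1+\varepsilon)\bigl(\tfrac12+\tfrac13+\tfrac1{4n}\bigr)(\Sigma+C)$. Fixing any $\varepsilon<\tfrac15$ makes $(1+\varepsilon)\tfrac56<1$, so the dichotomy of Proposition \ref{3term} applies: either $n$ is bounded outright, or $\Sigma$ is bounded and only finitely many $(X,Y,Z)$ remain, again bounding $n$. The decisive simplification over the function-field case is that the number-field abc has no exceptional ``$\in K^p$'' alternative; hence no residual $\mathcal{P}(\dots)_p$ term arises and we obtain the analogue of Corollary \ref{f1}, namely $\mathcal{P}(E,K,S,x)\subseteq\bigcup_{n\le\tilde C}\mathcal{P}(E,K,S,x)_n$.

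Finally, for each fixed $n>2$ I would show $\mathcal{P}(E,K,S,x)_n$ finite as in Proposition \ref{fixedbound}: making the full $2$-torsion rational and the $\alpha_i-\alpha_j$ into $S$-units, the factorization $C_P^2=\prod_i(A_P-\alpha_iB_P^2)$ together with (\ref{ddd})--(\ref{bb}) presents $z_i\pm z_j$ as $n$-th powers up to $S$-units, and Siegel's identities produce finitely many equations of the form $aX^{2\ell}+bY^{2\ell}=1$. Here the three-case analysis collapses to its first case: lacking a $K^p$ exception, the number-field abc forces finiteness of the solutions for $\ell>1$, so (\ref{z})--(\ref{x}) leave only finitely many values of $x(P)$, hence of $P$ (this fixed-$n$ step may alternatively be quoted unconditionally from \cite{ERS} or \cite{Jthesis}). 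I expect the genuine obstacle to be bookkeeping rather than a new idea, and to be concentrated in these height estimates: over a number field there is \emph{no} analogue of Lemma \ref{l1}, since the archimedean places break the product formula and $h\neq h_0$, so one must commit once and for all to the numerator height $h_0$ and check that the coprimality-based identities of Proposition \ref{3term} survive when the conductor is a \emph{weighted} sum $\sum_v\log N(\mathfrak{p}_v)$ rather than an unweighted $|T|$. The one quantitative point is the loss of Mason's clean bound to a $(1+\varepsilon)$ factor plus a constant $C_\varepsilon$; verifying that $\varepsilon<1/5$ preserves the strict inequality $(1+\varepsilon)\tfrac56<1$ is exactly what keeps the dichotomy intact.
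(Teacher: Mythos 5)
Your proposal is correct and follows essentially the same route as the paper, whose entire proof of Theorem \ref{mainNF} is the one-line remark that the argument is literally the function-field proof with Mason's theorem replaced by the number-field abc-hypothesis, characteristic-$p$ issues discarded, and archimedean places absorbed into $S$. The points you flag (the $(1+\varepsilon)$ factor still leaving $(1+\varepsilon)\tfrac{5}{6}<1$, the disappearance of the $K^p$ exceptional cases, and the option of quoting the fixed-$n$ step unconditionally from \cite{Jthesis}) are exactly the ones the paper itself relies on or mentions in the surrounding remarks.
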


The proof is literally the same as the one for function fields (discarding issues of prime characteristic, taking into account the archimedean places, and making some constants $S$-units), once the abc-hypothesis is reformulated as above. 
\end{se}

\begin{remark}
In a number field, the finiteness of the set of points $P \in E(K)$ for which $n$ divides $\nu(f(P))$ for all $\nu \notin S$ with $\nu(f(P)) <0$ for a \emph{fixed} power $n$ (analogue of Proposition \ref{fixedbound}), has already been proven unconditionally in \cite{Jthesis}, Theorem 5.2.1, relying on Faltings's Theorem \cite{Faltings}. Thus, in number fields, one only needs to solve the ternary equation from Proposition \ref{3term}. This equation can be solved in particular cases using the modularity of $\Q$-curves; see forthcoming work of Dahmen and the second author. 
\end{remark}

\bibliographystyle{amsplain}

\end{document}